\documentclass[11pt]{amsart}

% Jake style

\usepackage{amsmath, amssymb, amscd, cancel, graphicx, soul}
\usepackage{mathdots}

% Baldwin style

\headheight=7pt         \topmargin=14pt
\textheight=574pt       \textwidth=445pt
\oddsidemargin=18pt     \evensidemargin=18pt

\usepackage[all]{xy}
\usepackage{multirow}
\usepackage{longtable}
\usepackage{array}

\setlength{\parskip}{4pt}

\newtheorem{thm}{Theorem}[section]

\newtheorem{cor}[thm]{Corollary}
\newtheorem{lem}[thm]{Lemma}
\newtheorem{prop}[thm]{Proposition}

\newtheorem{defin}[thm]{Definition}

%This will basically eliminate the possibility that the first or last line of a paragraph appears all by itself on a single page of the document.

\clubpenalty=3000
\widowpenalty=3000
 
\def\c{{\mathfrak c}}

\def\s{{\mathfrak s}}
\def\t{{\mathfrak t}}

\def\L{{\mathbb L}}

\def\Q{{\mathbb Q}}

\def\T{{\mathcal T}}
\def\Z{{\mathbb Z}}

\def\Char{{\mathrm{Char}}}

\def\spc{{\mathrm{spin^c}}}
\def\Spc{{\mathrm{Spin^c}}}

\def\del{{\partial}}

\def\mod{{\textup{mod} \;}}
\def\rk{{\mathrm{rk}}}

\begin{document}

\title[L-space surgeries, genus bounds, and the cabling conjecture]%
{L-space surgeries, genus bounds, \\ and the cabling conjecture}

\begin{abstract}

We establish a tight inequality relating the knot genus $g(K)$ and the surgery slope $p$ under the assumption that  $p$-framed Dehn surgery along $K$ is an L-space that bounds a sharp 4-manifold.  This inequality applies in particular when the surgered manifold is a lens space or a connected sum thereof.  Combined with work of Gordon-Luecke, Hoffman, and Matignon-Sayari, it follows that if surgery along a knot produces a connected sum of lens spaces, then the knot is either a torus knot or a cable thereof, confirming the cabling conjecture in this case.

%We prove that if positive integer $p$-surgery along a knot $K \subset S^3$ produces an L-space and it bounds a sharp 4-manifold, then the knot genus obeys the bound  \[ 2g(K) -1 \leq p - \sqrt{3p+1}.\] Moreover, there exists an infinite family of pairs $(K_n,p_n)$ attaining this bound, where $K_n$ denotes an $n$-fold iterated cable of the unknot and $p_n \to \infty$.  In particular, the stated bound applies when the knot surgery produces a lens space or a connected sum thereof.  Combined with work of Gordon-Luecke, Hoffman, and Matignon-Sayari, it follows that if surgery along a knot produces a connected sum of lens spaces, then the knot is either a torus knot or a cable thereof, confirming the cabling conjecture in this case.

\end{abstract}

\author[Joshua Evan Greene]{Joshua Evan Greene}

\address{Department of Mathematics \\ Columbia University \\ New York, NY 10027}

\thanks{Partially supported by an NSF Post-doctoral Fellowship.}

\email{josh@math.columbia.edu}

\maketitle

\section{Introduction}\label{s: Lspace intro}

\subsection{Lens space surgeries.} Denote by $K$ a knot in $S^3$, $p$ a positive integer, and $q$ a non-zero integer. For a knot $K$ and slope $p/q$, let $K_{p/q}$ denote the result of $p/q$ Dehn surgery along $K$.   By definition, the lens space $L(p,q)$ is the {\em oriented} manifold $-U_{p/q}$, where $U$ denotes the unknot and $p/q \ne 1/n$.

When can surgery along a non-trivial knot $K$ produce a lens space?  This question remains unanswered forty years since Moser first raised it \cite{moser:torus}, although work by several researchers has led to significant progress on it.  For example, the cyclic surgery theorem of Culler-Gordon-Luecke-Shalen asserts that either $K$ is a torus knot or the surgery slope is an integer \cite{cgls:cyclic}, and a conjecturally complete construction due to Berge accounts for all the known examples \cite{berge:lens}.  Furthermore, we determine the complete list of lens spaces obtained by integer surgery along a knot in \cite{greene:lens}.

On the basis of Berge's construction, Goda-Teragaito conjectured an inequality relating the  surgery slope that produces a lens space and the knot genus $g(K)$ \cite{gt:lenssurgery}.  Reflect $K$ if necessary in order to assume that the slope is positive; then their conjecture asserts that for a {\em hyperbolic} knot $K$, \begin{equation}\label{e: goda-teragaito} \frac{p-1}{2} \leq 2g(K) -1 \leq p - 9. \end{equation}  The case of a non-hyperbolic knot is well-understood.  Note that $2g(K)-1$ equals {\em minus} the maximum Euler characteristic of a Seifert surface for $K$.

Both bounds in \eqref{e: goda-teragaito} are now close to settled.  Rasmussen established the inequality \[ \frac{p-5}{2} \leq 2g(K) -1 \] for {\em any} knot $K$ for which $K_p$ is a lens space, noting that it is attained for $p= 4k+3$ and $K$ the $(2,2k+1)$-torus knot \cite[Theorem 1]{r:lenssurgery}.  Kronheimer-Mrowka-Ozsv\'ath-Szab\'o established the bound \begin{equation}\label{e: kmos} 2g(K)-1 \leq p \end{equation} by an application of monopole Floer homology \cite[Corollary 8.5]{kmos}.  Their argument utilizes the fact that the Floer homology of a lens space is as simple as possible: $\rk \; \widehat{HF}(Y) = |H_1(Y;\Z)|$.  A space with this property is called an {\em L-space}, and a knot with a positive L-space surgery is called an {\em L-space knot}.  The proof of \eqref{e: kmos} extends to show that the set of positive rational slopes for which surgery along $K$ results in an L-space is either empty or consists of all rational values $\geq 2g(K) -1$.  This fact holds in the setting of Heegaard Floer homology as well \cite{os:absgr}, the framework in place for the remainder of this paper.

As remarked in \cite[pp. 537-8]{kmos}, the bound \eqref{e: kmos} can often be improved for the case of a lens space surgery.  Indeed, a closer examination of the Berge knots suggests the bound \begin{equation}\label{e: berge bound} 2g(K)-1 \leq p - 2 \sqrt{(4p+1)/5} \end{equation} whenever $K_p$ is a lens space, with the exception of $K$ the right-hand trefoil and $p=5$ (cf. \cite{saito:lens}).  This bound is attained by an infinite sequence of type VIII Berge knots $K$ and slopes $p \to \infty$.  Indeed, work of Rasmussen, including extensive computer calculations, implies that the bound \eqref{e: berge bound} holds for all $p \leq 100,000$ \cite{r:Lspace}.  

The current work addresses an improvement on the bound \eqref{e: kmos} in the direction of \eqref{e: berge bound}.  We begin with the method introduced and carried out in \cite{kmos,os:absgr}, which uses a version of Theorem \ref{t: torsion} below.  That theorem uses the {\em correction terms} of a lens space $L(p,q)$ to place a restriction on the genus of a knot $K$ with $K_p = L(p,q)$.  However, the formulae for these correction terms often prove unwieldy towards the end of extracting explicit bounds on the knot genus.  The key advance presented here stems from the observation that a lens space bounds a {\em sharp} four-manifold (Definition \ref{d: sharp}), whose existence enables us to distill the desired information.  In this more general set-up, we obtain the following result.

\begin{thm}\label{t: main}

Let $K$ denote an L-space knot and suppose that $K_p$ bounds a smooth, negative definite 4-manifold $X$ with $H_1(X;\Z)$ torsion-free.  Then the knot genus is bounded above by \begin{equation}\label{e: genusbound 1} 2g(K)-1 \leq p - \sqrt{p} - 1. \end{equation}  If $X$ can be chosen sharp, then we obtain the improved bound \begin{equation}\label{e: genusbound 2} 2g(K)-1 \leq p - \sqrt{3p+1}.\end{equation}  Furthermore, there exists an infinite family of pairs $(K_n, p_n)$ that attain equality in \eqref{e: genusbound 2}, where $K_n$ denotes an $n$-fold iterated cable of the unknot, and $p_n \to \infty$.

\end{thm}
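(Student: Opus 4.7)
The plan is to combine two structural inputs: the correction-term formula for positive surgeries on an L-space knot, and the lattice constraints that a negative-definite (resp.\ sharp) 4-manifold filling imposes on those correction terms.

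First, I would recall the Ozsv\'ath--Szab\'o description of the $d$-invariants of $K_p$. Since $K$ is an L-space knot of genus $g$, its Alexander polynomial has the symmetric form $\Delta_K(t) = \sum_{j=-k}^k (-1)^{k-j} t^{n_j}$ with $n_k = g$, and the torsion coefficients
\[
  V_s(K) \;=\; \sum_{j \geq 1} j \cdot a_{|s|+j}
\]
(where $a_m$ is the coefficient of $t^m$ in $\Delta_K$) form a non-negative, weakly decreasing sequence in $|s|$, vanishing for $|s| \geq g$. Identifying $\Spc(K_p) \leftrightarrow \Z/p$, these coefficients enter the formula
\[
  d(K_p, i) \;=\; d(L(p,1), i) - 2 V_{\min(i,\, p-i)}, \qquad i \in \{0,1,\ldots,p-1\},
\]
with $d(L(p,1), i) = ((2i-p)^2 - p)/(4p)$. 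The 4-manifold $X$ enters via the Ozsv\'ath--Szab\'o inequality
\[
  c_1(\mathfrak{s})^2 + b_2(X) \;\leq\; 4\, d(\del X,\, \mathfrak{s}|_{\del X}) \qquad \text{for every } \mathfrak{s} \in \Spc(X),
\]
with sharpness meaning that some extension of every boundary $\spc$-structure achieves equality. Because $H_1(X;\Z)$ is torsion-free, the intersection lattice $L := (H_2(X)/\mathrm{tor},\, Q_X)$ is negative-definite of discriminant $p$, and the $p$ cosets $L^*/L$ correspond bijectively to $\Spc(K_p)$.

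The bound then reduces to a problem about characteristic covectors. For each coset $[i] \in L^*/L$, let $\chi_i \in L^*$ be a characteristic representative of maximal square; substituting the $d$-invariant formula into the 4-manifold inequality gives
\[
  -\chi_i^2 \;\geq\; 8 V_{\min(i,\, p-i)} + n - 4\, d(L(p,1), i),
\]
with equality for every $i$ under sharpness, where $n = b_2(X)$. I would compare $\sum_i (-\chi_i^2)$ against the two sides of this inequality. On one side, lattice estimates (using that characteristic covectors of a rank-$n$ negative-definite lattice in fixed cosets modulo $2L$ have squared norms constrained by the lattice structure, and their residues modulo $8$ are fixed per coset) bound $\sum_i(-\chi_i^2)$ from above in terms of $n$ and $p$. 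On the other side, the L-space-knot hypothesis forces $\sum_i V_{\min(i,p-i)}$ to be bounded below by an explicit increasing function of $g$. The two sides can only balance if the requisite inequality between $g$ and $p$ holds; this yields \eqref{e: genusbound 1} in the negative-definite case and the tighter \eqref{e: genusbound 2} in the sharp case, where the two-sided equality substantially tightens the extraction.

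The main obstacle will be deriving the precise constants $\sqrt{p}+1$ and especially $\sqrt{3p+1}$. The ``$3$'' in the sharp case should emerge from the quadratic term $(2i-p)^2$ in $d(L(p,1),i)$ matching against a triangular arrangement of characteristic squares forced by sharpness across all $p$ cosets; this demands simultaneous optimization rather than term-by-term estimation. For the infinite family attaining equality in \eqref{e: genusbound 2}, I would take iterated $(2,\,2k_n+1)$-cables $K_n$ of the unknot, verify inductively by Hedden's cabling theorem that each is an L-space knot with a suitable integer L-space surgery $p_n$, exhibit the corresponding sharp plumbed filling produced by the cabling construction, and check that $2g(K_n) - 1 = p_n - \sqrt{3 p_n + 1}$ with $p_n \to \infty$.
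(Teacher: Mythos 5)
There is a genuine gap here: your outline assembles the right raw inputs (the formula $d(K_p,i) = d(U_p,i) - 2t_{\min(i,p-i)}(K)$, the Ozsv\'ath--Szab\'o inequality for negative definite fillings, and the vanishing $t_i(K)=0 \iff |i|\geq g(K)$), but the step that actually produces the constants $\sqrt{p}$ and $\sqrt{3p+1}$ is exactly the part you defer, and the route you propose for it --- summing $-\chi_i^2$ over the $p$ cosets of the intersection lattice of $X$ alone and balancing against $\sum_i t_{\min(i,p-i)}(K)$ --- is not how these constants arise and is unlikely to yield them. An averaged inequality sees only the total size of the torsion coefficients, whereas the genus enters through \emph{which} spin$^c$ structures have vanishing torsion coefficient; moreover your per-coset inequality still carries the term $4d(U_p,i) = ((2i-p)^2-p)/p$, precisely the ``unwieldy'' quantity the paper is structured to eliminate. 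The paper's mechanism is different: glue $X$ to $W := -W_p(K)$ to form a \emph{closed} negative definite $Z$, apply Donaldson's theorem to identify $H_2(Z) \cong -\Z^{n+1}$, and let $\sigma \in -\Z^{n+1}$ be the image of the capped Seifert surface class, with $\langle\sigma,\sigma\rangle = -p$. The sharpness of $W_{-p}(U)$ makes the lens-space correction terms cancel against $d(U_p,i)$, leaving $c_1^2 + (n+1) \leq -8t_i(K)$; taking the characteristic vectors $c \in \{\pm1\}^{n+1}$, for which the left side vanishes, forces $t_i(K)=0$ and hence $2g(K) \leq \langle c,\sigma\rangle + p$, whence $2g(K) \leq p - |\sigma|_1$. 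Bound \eqref{e: genusbound 1} is then just $|\sigma|_1 \geq \sqrt{p}$, and bound \eqref{e: genusbound 2} comes from showing that sharpness of $X$ forces $\sigma$ to be a \emph{changemaker} ($\sigma_i \leq \sigma_0+\cdots+\sigma_{i-1}+1$), which gives $(|\sigma|_1+1)^2 \geq 3p+1$. None of this combinatorial structure is visible from the lattice of $X$ by itself, so without the Donaldson embedding and the vector $\sigma$ your plan stalls exactly at the point you flag as the main obstacle.

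Your sketch of the extremal family is closer in spirit: the paper's examples are indeed iterated $(2,a_n)$-cables of the unknot. But the verification is not via Hedden's cabling theorem plus a generic ``sharp plumbed filling''; the paper realizes $(K_n)_{p_n}$ as the branched double cover of an explicit alternating knot $\kappa_n$ (so sharpness comes from Proposition \ref{p: sigma(K)}), identifies $K_n$ and $p_n$ by an inductive tangle-decomposition argument, and checks equality in \eqref{e: genusbound 2} by computing that $\sigma = \sum_i 2^{i-1}e_i$ is the unique changemaker achieving equality in \eqref{e: change} throughout. You would need to supply both the specific recursion $a_{n+1}=2p_n+1$, $p_n=2a_n-1$ and a proof that the resulting surgered manifolds bound sharp $4$-manifolds; neither follows from the cabling formalism alone.
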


\noindent We do not know as much concerning the tightness of inequality \eqref{e: genusbound 1}.  It does, however, lead to an improvement over \cite[Proposition 1.3]{os:lattices} for $p \geq 9$, which under the same assumptions establishes that $2g(K)-1 \leq p-4$ for the specific case of a torus knot $K$.

For the case of a lens space surgery, we establish the bound \eqref{e: berge bound} in \cite{greene:lens}.

\begin{thm}\label{t: berge bound}

Suppose that $K \subset S^3$, $p$ is a positive integer, and $K_p$ is a lens space.  Then \[2g(K)-1 \leq p - 2 \sqrt{(4p+1)/5},\] unless $K$ is the right-hand trefoil and $p = 5$.  Moreover, this bound is attained by an infinite family of distinct type VIII Berge knots $K$ and slopes $p \to \infty$. \qed

\end{thm}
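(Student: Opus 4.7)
The plan is to take Theorem \ref{t: main} as the starting point and sharpen it by exploiting the combinatorial structure of a lens space and its canonical sharp filling. A lens space $L(p,q)$ is an L-space, so after possibly reflecting $K$ the knot is an L-space knot, and $L(p,q)$ bounds a canonical negative-definite plumbing $X$ on a linear chain of $2$-spheres with framings prescribed by the Hirzebruch--Jung expansion of $p/q$.  A direct computation of correction terms shows that this $X$ is sharp, so Theorem \ref{t: main} applies and yields $2g(K)-1 \leq p - \sqrt{3p+1}$.  However, for $p > 1$ this is strictly weaker than the Berge bound $p - 2\sqrt{(4p+1)/5}$, so additional work is needed.

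The refinement proceeds via a \emph{changemaker} lattice embedding.  Gluing $X$ to the $p$-framed $2$-handle cobordism of $K$ along $L(p,q)$ yields a closed, smooth, negative-definite $4$-manifold, so Donaldson's theorem provides an isometric embedding of its intersection form into $-\Z^{N+1}$.  Sharpness of $X$, combined with the fact that the Heegaard Floer complex of an L-space knot is governed by a nondecreasing gap sequence read off from the Alexander polynomial, forces the vector $\sigma \in -\Z^{N+1}$ dual to the surgery $2$-handle to be a \emph{changemaker}: its entries $0 \leq \sigma_1 \leq \cdots \leq \sigma_n$ satisfy $\sigma_{i+1} \leq 1 + \sigma_1 + \cdots + \sigma_i$ and $\sum \sigma_i^2 = p$.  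The orthogonal complement of $\sigma$ in $-\Z^{N+1}$ is then isometric, up to stabilization, to the plumbing lattice of the linear chain bounding $L(p,q)$, while the torsion coefficients of $K$, and hence the genus $g(K)$, are read off from $\sigma$.

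The remaining task is a purely lattice-theoretic classification problem: determine which changemaker vectors $\sigma$ admit a linear chain lattice as orthogonal complement, and translate the resulting constraints into a sharp lower bound on $p$ in terms of $2g(K)-1$.  This is where the principal technical difficulty lies, and it is the content of \cite{greene:lens}: a delicate case analysis driven by the specific structure of linear chain lattices produces the inequality $2g(K)-1 \leq p - 2\sqrt{(4p+1)/5}$, with the right-hand trefoil at $p = 5$ emerging as the unique exception to the argument.  To see that the bound is sharp, one exhibits an explicit infinite sequence of changemaker pairs whose orthogonal complements realize the linear chains corresponding to type VIII Berge surgeries, confirming that equality is attained along a family of knots $K$ with slopes $p \to \infty$.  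The hard part throughout is the lattice combinatorics, since the diagonal embedding machinery alone only recovers the Theorem \ref{t: main} bound.
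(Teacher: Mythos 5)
Your proposal is correct and takes essentially the same route as the paper: Theorem \ref{t: berge bound} is stated here with no proof beyond a citation, since the changemaker embedding of Theorem \ref{t: changemaker} together with the classification of changemakers whose orthogonal complements are linear chain lattices is precisely the content of \cite{greene:lens}, as the paper explains in Section \ref{ss: lens conclusion}. Your outline of that strategy (and your observation that the sharp-manifold bound $p-\sqrt{3p+1}$ alone is insufficient) matches the paper's intent exactly.
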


\noindent  For comparison between Theorems \ref{t: main} and \ref{t: berge bound}, note that $2\sqrt{4/5} \approx 1.79$ and $\sqrt{3} \approx 1.73$.  We touch on Theorem \ref{t: berge bound} again in Section \ref{ss: lens conclusion}.

Thus, both \eqref{e: berge bound} and \eqref{e: genusbound 2} dramatically improve on Goda-Teragaito's second conjectured bound \eqref{e: goda-teragaito} for $p \gg 0$. In Section \ref{ss: goda-teragaito} we indicate how that bound follows for all except two values $p \in \{14,19\}$.  In addition, Baker-Grigsby-Hedden \cite{bgh:lens} and Rasmussen \cite{r:Lspace} have proposed programs to prove the completeness of Berge's construction using Floer homology.  One step involved in both approaches is to argue the non-existence of a non-trivial knot $K$ for which $K_{2g(K)-1}$ is a lens space.  This fact follows immediately from Theorem \ref{t: main}.

%%%
%%%
%%%

\subsection{Reducible surgeries.} When can surgery along a knot $K$ produce a reducible 3-manifold?  The cabling conjecture of Gonzalez-Acu\~na -- Short asserts that this can only occur when the knot is a cable knot, with the surgery slope provided by the cabling annulus \cite[Conjecture A]{gs:cabling}, \cite[Problem 1.79]{kirby:problems}.  From this it would follow that the surgery slope is an integer, and the reducible manifold has two prime summands, one of which is a lens space.

Analogous to the cyclic surgery theorem in this context, Gordon-Luecke proved that the surgery slope of a reducible surgery is an integer $p$, which we can again take to be positive upon reflecting the knot \cite{gl:reducible}.  They also proved that $K_p$ has a lens space summand \cite[Theorem 3]{gl:knotcomplements}.  In this vein, further work of Howie, Sayari, and Valdez S{\'a}nchez implies that $K_p$ has at most three prime summands, and if it has three, then two are lens spaces of coprime orders and the third is a homology sphere \cite{howie:free, sayari:bridge,valdezsanchez:fillings}.

Apparently unknown to practitioners of Floer homology, a bound strikingly opposite to \eqref{e: kmos} holds in this setting.  Building on work of Hoffman, Matignon-Sayari showed that if $K_p$ is reducible, then either $K$ and $p$ satisfy the conclusions of the cabling conjecture, or else \begin{equation}\label{e: ms}  p \leq 2g(K)-1\end{equation} \cite{hoffman,matignon-sayari}. Note that if $K$ is a cable knot with cabling slope $p$, then there is no relation in general between $p$ and $g(K)$.  On the other hand, assuming the surgered manifold is an L-space, we have the following easy result.

\begin{prop}\label{p: cable}

If $K$ is a cable knot with cabling slope $p$, and $K_p$ is an L-space, then $2g(K)-1 < p$.

\end{prop}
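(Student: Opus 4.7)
The plan is to reduce the claim to a statement about the companion knot via the classical decomposition of the cabling-slope surgery, and then apply the genus bound from \eqref{e: kmos} and its rational-slope extension described in the excerpt. Write $K=C_{a,b}(K')$ as the $(a,b)$-cable of a companion knot $K' \subset S^3$, with $a \geq 2$, $\gcd(a,b)=1$, and (after reflecting if necessary) $b>0$, so that the cabling slope is $p = ab$. A computation going back to Moser, and recorded by Gordon, identifies
\[
K_p \;\cong\; S^3_{b/a}(K') \;\#\; L(a,b).
\]
The K\"unneth formula for $\widehat{HF}$ of a connected sum, together with the general bound $\rk\, \widehat{HF}(Y) \geq |H_1(Y;\Z)|$, implies that a connected sum of closed oriented 3-manifolds is an L-space if and only if each summand is. Since $L(a,b)$ is automatically an L-space, the hypothesis on $K_p$ forces $S^3_{b/a}(K')$ to be an L-space as well.

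Next I would invoke the rational-slope extension of \eqref{e: kmos} cited in the introduction: the companion $K'$ must itself be an L-space knot, and any positive L-space surgery slope on $K'$ satisfies $b/a \geq 2g(K') - 1$. The cable genus formula $g(K) = a\, g(K') + (a-1)(b-1)/2$ then yields
\[
p - \bigl(2g(K)-1\bigr) \;=\; ab - \bigl(2a\, g(K') + ab - a - b\bigr) \;=\; a\bigl(b/a - (2g(K')-1)\bigr),
\]
so the desired inequality $2g(K)-1 < p$ is equivalent to the \emph{strict} inequality $b/a > 2g(K') - 1$.

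The only point needing attention is upgrading the weak inequality $b/a \geq 2g(K')-1$ supplied by the L-space condition to a strict one. This is immediate from the cable conventions: since $\gcd(a,b)=1$ and $a \geq 2$, the ratio $b/a$ is not an integer, whereas $2g(K')-1$ is, so equality is impossible. (If $K'$ is the unknot, then $2g(K')-1 = -1 < 0 < b/a$, and the strict inequality is automatic.) Combined with the displayed identity, this yields the claim. The main, rather mild, obstacle is thus the bookkeeping of cable conventions and the verification that the connected-sum L-space criterion applies in the direction one wants; once this is in place no further Floer-theoretic input beyond \eqref{e: kmos} is required.
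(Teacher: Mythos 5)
Your proposal is correct and follows essentially the same route as the paper: decompose $K_p$ as a connected sum of a rational surgery on the companion with a lens space, deduce that the companion surgery is an L-space, apply the rational-slope extension of \eqref{e: kmos} (with strictness coming from $b/a \notin \Z$), and conclude via the cable genus formula. The extra details you supply (the K\"unneth argument for connected sums and the separate unknot check) are correct but implicit in the paper's version.
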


\begin{proof}

Let $K = C_{q,r}(\kappa)$ denote the cable knot, where $|q| \geq 2$.  Thus, $p = qr$ and $K_p \cong \kappa_{r/q} \# (- L(q,r))$.  In order for $K_p$ to be an L-space, $\kappa_{r/q}$ must be as well, so \eqref{e: kmos} implies that \[ 2g(\kappa) - 1 < r/q;\] the inequality is strict since the left side is an integer while the right side is not. On the other hand, an elementary calculation shows that \[ 2g(K)-1 = qr + q(2g(\kappa)-1)-r.\]  Thus, $2g(K) - 1 < qr = p$, as desired.

\end{proof}

Thus, in light of the Matignon-Sayari bound \eqref{e: ms} and Proposition \ref{p: cable}, in order to establish the cabling conjecture under the assumption that the surgered manifold is an L-space, it suffices to show that $K_{2g(K)-1}$ is never a reducible L-space.  Theorem \ref{t: main} shows that this is the case if we further assume that the surgered manifold bounds a negative definite 4-manifold with torsion-free $H_1$.

\begin{cor}\label{c: cor}

Suppose that $K_p$ is a reducible L-space and it bounds a smooth, negative definite 4-manifold $X$ with $H_1(X;\Z)$ torsion-free.  Then $K$ is a cable knot with cabling slope $p$. \qed

\end{cor}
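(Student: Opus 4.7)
The plan is to combine Theorem~\ref{t: main} with the Matignon--Sayari dichotomy \eqref{e: ms}: the former forces $2g(K) - 1$ to be strictly less than $p$, which excludes one branch of the dichotomy and leaves precisely the cabling conclusion.

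First I would verify that the hypotheses of Theorem~\ref{t: main} are met. Since $K_p$ is assumed to be an L-space, $K$ is by definition an L-space knot; and the 4-manifold $X$ supplied by the hypothesis is negative definite with torsion-free $H_1$. Applying inequality \eqref{e: genusbound 1} therefore yields
\[ 2g(K) - 1 \leq p - \sqrt{p} - 1. \]
Because $p$ is a positive integer, $\sqrt{p}+1 > 0$, so the right-hand side is strictly less than $p$; in particular $2g(K)-1 < p$.

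Next I would invoke the Matignon--Sayari theorem, recalled in \eqref{e: ms}. Since $K_p$ is reducible, it asserts that either $(K,p)$ already satisfies the conclusion of the cabling conjecture---that is, $K$ is a cable knot with cabling slope $p$---or else $p \leq 2g(K)-1$. The previous step has ruled out the second alternative, so the first must hold, which is exactly what Corollary~\ref{c: cor} asserts. (Implicit here is Gordon--Luecke's theorem that the slope of a reducible surgery is an integer, consistent with the standing convention that $p$ denotes a positive integer.)

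There is essentially no technical obstacle in this argument: all of the substance is carried by Theorem~\ref{t: main} on one side and the Matignon--Sayari bound \eqref{e: ms} on the other, and the corollary amounts to observing that these two inequalities are incompatible unless the cabling branch of the dichotomy already holds. The only mild subtlety worth flagging is to check that the constant in \eqref{e: genusbound 1} is strong enough to give a strict inequality $2g(K)-1 < p$; the weaker bound $2g(K) - 1 \leq p$ of \eqref{e: kmos} would not by itself suffice to eliminate the borderline case $p = 2g(K)-1$, which is exactly the case one needs to rule out.
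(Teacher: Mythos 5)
Your proposal is correct and follows essentially the same route as the paper: the text preceding Corollary~\ref{c: cor} combines the Matignon--Sayari dichotomy \eqref{e: ms} with Theorem~\ref{t: main} (via \eqref{e: kmos}, reducing the non-cabling branch to the borderline case $p = 2g(K)-1$, which \eqref{e: genusbound 1} excludes). Your observation that the strict inequality from \eqref{e: genusbound 1} is what rules out the borderline case, where \eqref{e: kmos} alone would not, is exactly the point the paper is making.
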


\noindent In particular, Corollary \ref{c: cor} applies to a connected sum of lens spaces, a natural case of interest in view of the fact that any reducible surgery has a lens space summand. Accordingly, the cabling conjecture follows in this case.  A quick appeal to \cite[$\S$3]{gl:reducible} fills in the details of the following result.

\begin{thm}\label{t: cabling}

Suppose that surgery along a knot $K \subset S^3$ produces a connected sum of lens spaces.  Then $K$ is either a $(p,q)$-torus knot or a $(p,q)$-cable of an $(r,s)$-torus knot with $p = qrs \pm 1$, and the surgery slope is $pq$. The surgered manifold is $L(p,q) \# L(q,p)$ or $L(p, qs^2) \# L(q, \pm 1)$, respectively, both taken with the {\em opposite} orientation. \qed

\end{thm}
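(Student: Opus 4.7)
The plan is to combine Corollary \ref{c: cor} with Gordon-Luecke's integer-slope theorem, the cyclic surgery theorem, and Moser's classification of torus-knot surgeries. First, \cite{gl:reducible} forces the slope to be an integer $p$, which we take positive after reflecting $K$ if needed. A connected sum of lens spaces is an L-space, since $\widehat{HF}$ is multiplicative under connected sum, and it bounds a simply-connected, negative-definite 4-manifold obtained by boundary-connect-summing the standard linear plumbings bounded by each summand. The hypotheses of Corollary \ref{c: cor} thus hold, and $K = C_{a,b}(\kappa)$ is a cable knot with $|a| \ge 2$, cabling slope $p = ab$, and
\[
K_p \;\cong\; \kappa_{b/a}\,\#\,(-L(a,b)).
\]

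Next I pin down $\kappa$. Since $K_p$ is a connected sum of lens spaces, uniqueness of prime decomposition forces $\kappa_{b/a}$ to be a connected sum of lens spaces as well. The slope $b/a$ is non-integral because $|a|\ge 2$ and $\gcd(a,b)=1$, so a proper reducible decomposition of $\kappa_{b/a}$ would contradict \cite{gl:reducible}; hence $\kappa_{b/a}$ is a single lens space (or $S^3$, forcing $\kappa$ to be unknotted by \cite{gl:knotcomplements}). In the unknot case $K = T(a,b)$, and the identity
\[
T(a,b)_{ab} \;=\; U_{b/a}\,\#\,(-L(a,b)) \;=\; -L(b,a)\,\#\,(-L(a,b))
\]
matches the first case of the statement after relabelling $(a,b)\mapsto (p,q)$.

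Otherwise $\kappa$ is a non-trivial knot with a non-integral lens space surgery, and the cyclic surgery theorem \cite{cgls:cyclic} identifies $\kappa$ as a torus knot $T(r,s)$. Moser's classification \cite{moser:torus} then forces $|b - ars| = 1$ and identifies $\kappa_{b/a}$ with a specific lens space of order $b$. Substituting and using $b \equiv \pm 1 \pmod{a}$, so that $-L(a,b) = -L(a,\pm 1)$, yields
\[
K_p \;\cong\; -(L(b, as^2)\,\#\,L(a, \pm 1)),
\]
matching the second case after relabelling $(a,b)\mapsto (q,p)$.

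The only real obstacle is bookkeeping: matching Moser's orientation convention against the excerpt's normalization $L(p,q) = -U_{p/q}$, and tracking the $\pm$ sign consistently through the two Moser cases $b = ars \pm 1$. All the topological content is bundled into Corollary \ref{c: cor} together with the three classical cited results, so no additional input beyond these four ingredients is required.
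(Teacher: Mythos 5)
Your proposal is correct and follows essentially the same route as the paper: the paper likewise applies Corollary \ref{c: cor} (after noting that a connected sum of lens spaces is an L-space bounding a negative definite 4-manifold with $H_1=0$) to conclude $K$ is a cable with the cabling slope, and then cites \cite[$\S$3]{gl:reducible} for exactly the remaining identification you carry out by hand via the cyclic surgery theorem, Moser's classification, and the knot complement theorem. You have simply filled in the details that the paper outsources to that citation.
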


\subsection{Conventions.} We use homology groups with integer coefficients throughout.  All 4-manifolds are assumed smooth.  For a compact 4-manifold $X$, regard $H_2(X)$ as an inner product space equipped with the intersection pairing on $X$.  Let $W_{\pm p}(K)$ denote the 4-manifold obtained by attaching a $\pm p$-framed 2-handle to $D^4$ along the knot $K \subset S^3 = \del D^4$.

%%%%%%%%%%
%%%%%%%%%%
%%%%%%%%%%

\section*{Acknowledgments}

Thanks to John Baldwin for his advice on this paper.  Thanks to Cameron Gordon and John Luecke for enjoyable discussions and for drawing my attention to the references \cite{hoffman, matignon-sayari}.

%%%%%%%%%%
%%%%%%%%%%
%%%%%%%%%%

\section{Input from Floer homology}\label{s: input}

In \cite{os:absgr}, Ozsv\'ath-Szab\'o associated a numerical invariant $d(Y,\t) \in \Q$ called a {\em correction term} to an oriented rational homology sphere $Y$ equipped with a $\spc$ structure $\t$.  It is analogous to Fr{\o}yshov's $h$-invariant in monopole Floer homology \cite{froyshov:h}.  They proved that this invariant obeys the relation $d(-Y,\t) = - d(Y,\t)$, and that if $Y$ is the boundary of a negative definite 4-manifold $X$, then \begin{equation}\label{e: d bound} c_1(\s)^2 + b_2(X) \leq 4d(Y,\t) \end{equation} for every $\s \in \Spc(X)$ which extends $\t \in \Spc(Y)$ \cite[Theorem 9.6]{os:absgr}.

\begin{defin}\label{d: sharp}

A negative definite 4-manifold $X$ is {\em sharp} if, for every $\t \in \Spc(Y)$, there exists some extension $\s \in \Spc(X)$ that attains equality in the bound (\ref{e: d bound}).

\end{defin}

The following result provides the examples of L-spaces and sharp 4-manifolds that we will need.

\begin{prop}[Proposition 3.3 and Theorem 3.4, \cite{os:doublecover}]\label{p: sigma(K)}

Let $L$ denote a non-split alternating link.  Then the branched double-cover $\Sigma(L)$ is an L-space\footnote{No relation between the ``L"'s!}, and there exists a sharp 4-manifold $X$ with $\del X = \Sigma(L)$ and $H_1(X) = 0$.

\end{prop}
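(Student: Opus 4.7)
The plan is to prove the two claims in tandem by induction on the number of crossings in a reduced alternating diagram $D$ of $L$. The base case is the unknot (or, more conveniently, a $(2,n)$-torus link), where $\Sigma(L)$ is $S^3$ or a lens space and the 4-manifold $X$ is either $D^4$ or a linear plumbing of disk bundles with Euler number $-2$ on each vertex; sharpness of such linear plumbings is a direct computation from the Ozsv\'ath--Szab\'o formula for correction terms of lens spaces.

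For the inductive step, I would use the unoriented skein exact triangle in Heegaard Floer homology:
\begin{equation*}
\cdots \to \widehat{HF}(\Sigma(L)) \to \widehat{HF}(\Sigma(L_0)) \to \widehat{HF}(\Sigma(L_1)) \to \cdots,
\end{equation*}
where $L_0, L_1$ are the two resolutions at a chosen crossing of $D$. In a reduced alternating diagram one can always find a crossing whose two resolutions yield non-split alternating diagrams with strictly fewer crossings (this is a standard combinatorial argument on alternating diagrams, isolating a crossing incident to an innermost bigon or similar). By induction $\Sigma(L_0)$ and $\Sigma(L_1)$ are L-spaces, and an elementary rank count in the triangle combined with the determinant relation $\det(L) = \det(L_0) + \det(L_1)$ (after orienting so both terms appear with the same sign) yields $\rk\,\widehat{HF}(\Sigma(L)) \leq |H_1(\Sigma(L))|$. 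The reverse inequality always holds, so $\Sigma(L)$ is an L-space.

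For the sharp 4-manifold, I would take $X$ to be the plumbing built from the Tait (black) graph of the checkerboard coloring of $D$, with one $0$-handle per black region and a twisted $2$-handle per crossing; its intersection form is the Goeritz form, which is negative definite for a suitable convention on the coloring, and $H_1(X) = 0$ because the black graph is connected and the plumbing is simply connected. The boundary is $\Sigma(L)$ by the standard branched-cover construction. To show sharpness, I would match each $\t \in \Spc(\Sigma(L))$ with a characteristic coset in $H^2(X)$ and exhibit, within that coset, a characteristic vector $\xi$ achieving
\begin{equation*}
d(\Sigma(L), \t) = \frac{c_1(\s)^2 + b_2(X)}{4},\qquad \s \leftrightarrow \xi.
\end{equation*}
The induction proceeds by comparing the behavior of correction terms under the exact triangle with the behavior of the extremal characteristic vectors on the Goeritz forms of $D_0$ and $D_1$: resolving a crossing corresponds to either collapsing an edge of the black graph or deleting it, and each operation changes the Goeritz lattice in a controlled way that can be tracked against the Ozsv\'ath--Szab\'o rational surgery exact sequence relating $d$-invariants.

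The main obstacle is the sharpness step, specifically the bookkeeping of $\spc$ structures under the skein triangle and their correspondence with characteristic covectors on the two Goeritz lattices obtained from $D_0$ and $D_1$. One has to verify that an extremal $\xi$ realizing the correction term for $\Sigma(L_0)$ or $\Sigma(L_1)$ extends to one on the lattice of $D$ without loss in the quantity $-(\xi^2 + b_2)/4$; this is where the alternating hypothesis is used in an essential way, because the Goeritz lattices of an alternating diagram and its resolutions are related by the combinatorial moves (edge deletion and contraction) under which the extremal characteristic covectors behave well. I would handle this by introducing the lattice-theoretic invariant that computes the right-hand side of the sharpness equality and checking it satisfies the same recursion as $d(\Sigma(L), \t)$.
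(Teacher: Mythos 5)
This proposition is not proved in the paper at all: it is quoted verbatim from Ozsv\'ath--Szab\'o \cite{os:doublecover}, so the only meaningful comparison is with their argument. Your outline does track that argument: the L-space claim is proved there by induction using the unoriented skein exact triangle together with the additivity $\det(L) = \det(L_0) + \det(L_1)$ for resolutions of a connected alternating diagram, and the sharp $4$-manifold is the one built from the negative definite Goeritz form of the checkerboard coloring (the construction recalled on p.~719 of \cite{os:unknotting}, which the present paper itself invokes in Section \ref{s: iterated cable}). So the skeleton of your proof is the right one, and the L-space half is essentially complete modulo the combinatorial lemma that a reduced connected alternating diagram has a crossing both of whose resolutions remain connected, non-split and alternating with smaller determinant --- this is exactly the ``quasi-alternating'' property and should be proved, not asserted; note also that the rank count in the triangle requires $\det(L_0),\det(L_1)>0$, i.e.\ that both resolutions stay rational homology spheres, and that the induction runs more cleanly on $\det(L)$ than on crossing number.

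The genuine gap is exactly where you flag ``the main obstacle'': the sharpness step is a plan, not an argument. To establish that the Goeritz $4$-manifold $X$ is sharp one must (i) identify $\Spc(\Sigma(L))$ with the set of characteristic cosets of the Goeritz lattice, (ii) introduce the combinatorial quantity $\mu(\t) = \max\,(\xi^2 + b_2(X))/4$ over characteristic vectors $\xi$ in the coset corresponding to $\t$, and (iii) prove that $\mu$ obeys the same recursion under edge deletion and contraction of the Tait graph that the correction terms $d(\Sigma(L),\t)$ obey under the skein triangle, including the matching of $\spc$ structures across the triangle and the verification that extremal vectors for the resolved diagrams assemble into an extremal vector for $D$ with no loss. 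You name the needed invariant but neither define it nor verify the recursion, and that verification is where essentially all of the work in the cited proof lives; without it the inequality $c_1(\s)^2 + b_2(X) \leq 4d(\Sigma(L),\t)$ of \eqref{e: d bound} is all one has, and sharpness --- the existence of an extension attaining equality for \emph{every} $\t$ --- does not follow.
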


Every lens space $L(p,q)$ arises as the branched double-cover of a 2-bridge link.  In this case, the 4-manifold $X(p,q)$ implied by Proposition \ref{p: sigma(K)} admits the following description.  Assume $p > q > 0$, and write $p/q = [a_1,\dots,a_n]^-$ as a Hirzebruch-Jung continued fraction, with each $a_i \geq 2$. Then $X(p,q)$ denotes plumbing along a linear chain of disk bundles over $S^2$ with Euler numbers $-a_1,\dots,-a_n$, in that order.  From this perspective, the sharpness of $X(p,q)$ also follows from \cite[Corollary 1.5]{os:plumbed}.  In particular, $W_{-p}(U)$ is sharp, since it is diffeomorphic to the disk bundle of Euler number $-p$ over $S^2$.

In order to make use of \eqref{e: d bound}, we must understand $\spc$ structures on $K_p$.  Given $\t \in \Spc(K_p)$, it extends to some $\s  \in \Spc(W_p(K))$ since $H_1(W_p(K)) = 0$.  The group $H_2(W_p(K))$ is generated by the class of a surface $\Sigma$ obtained by smoothly gluing the core of the handle attachment to a copy of a Seifert surface for $K$ with its interior pushed into $int(D^4)$.  The quantity $\langle c_1(\s), [\Sigma] \rangle + p$ is an even value $2i$ whose residue class $(\mod 2p)$ does not depend on the choice of extension $\s$.  The assignment $\t \mapsto i$ sets up a 1-1 correspondence \begin{equation}\label{e: spc} \Spc(K_p) \overset{\sim}{\longrightarrow} \Z / p \Z.\end{equation}

Next, suppose that $K_p$ bounds a smooth, negative definite 4-manifold $X$ with $n := b_2(X)$.  The manifold $W := - W_p(K) \cong W_{-p}(\overline{K})$ is negative definite and has boundary $\overline{K}_{-p} = - K_p$, where $\overline{K}$ denotes the mirror image of $K$.  Form the closed, smooth, oriented 4-manifold $Z:= X \cup W$.  Since $b_1(K_p) = 0$, it follows that $b_2(Z) = b_2(X) + b_2(W) = n+1$; and since $H_2(X) \oplus H_2(W) \hookrightarrow H_2(Z)$, it follows that $Z$ is negative definite.  In particular, the square of a class in $H^2(Z)$ equals the sum of the squares of its restrictions to $H^2(X)$ and $H^2(W)$.

\begin{lem}\label{l: bound}

Suppose that $K_p$ bounds a smooth, negative definite 4-manifold $X$ with $H_1(X)$ torsion-free, and form $Z = X \cup W$ as above.  Then every $i \in \Spc(K_p)$ extends to some $\s \in \Spc(Z)$, and \begin{equation}\label{e: bound} c_1(\s)^2 + (n+1) \leq 4d(K_p,i) - 4d(U_p,i). \end{equation} Furthermore, if $X$ is sharp, then for every $i$ there exists some extension $\s$ that attains equality in \eqref{e: bound}.

\end{lem}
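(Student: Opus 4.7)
The plan is to build the required $\mathfrak{s} \in \Spc(Z)$ by separately extending $i$ over $X$ and over $W$ and then gluing, and to split the resulting square inequality into an $X$-contribution controlled by the Ozsv\'ath--Szab\'o bound \eqref{e: d bound} and a $W$-contribution controlled by the sharpness of $W_{-p}(U)$. For the extensions: since $H_1(W) = 0$, Poincar\'e--Lefschetz duality and the long exact sequence of $(W, \partial W)$ yield $H^3(W,\partial W) \cong H_1(W) = 0$, so $H^2(W) \to H^2(\partial W)$ is surjective and hence $\Spc(W) \to \Spc(-K_p)$ is surjective. For $X$ I would invoke the torsion-freeness of $H_1(X)$: the connecting map $H^2(K_p) \to H^3(X, K_p) \cong H_1(X)$ sends the torsion group $\Z/p\Z$ into a torsion-free group, so it vanishes, making $H^2(X) \to H^2(K_p)$ and hence $\Spc(X) \to \Spc(K_p)$ surjective. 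Under the canonical identification $\Spc(K_p) = \Spc(-K_p)$ and using $H^1(K_p) = 0$, the two extensions $\mathfrak{s}_X$ and $\mathfrak{s}_W$ glue uniquely to an $\mathfrak{s} \in \Spc(Z)$.

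For the inequality, the additivity $c_1(\mathfrak{s})^2 = c_1(\mathfrak{s}_X)^2 + c_1(\mathfrak{s}_W)^2$ noted in the excerpt, together with $b_2(Z) = n+1$, allows me to write
\[ c_1(\mathfrak{s})^2 + (n+1) = \bigl(c_1(\mathfrak{s}_X)^2 + n\bigr) + \bigl(c_1(\mathfrak{s}_W)^2 + 1\bigr). \]
The first term is bounded by $4d(K_p, i)$ by \eqref{e: d bound} applied to the negative definite $X$. For the second term, the key observation is that $W$ and $W_{-p}(U)$ share the intersection form $\langle -p \rangle$, so their spin$^c$ structures restricting to the common class $i$ realize the same set of values $c_1^2 = -m^2/p$, with $m$ running through an arithmetic progression determined by $i \pmod p$. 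Since $W_{-p}(U)$ is sharp (from the discussion following Proposition \ref{p: sigma(K)}), the maximum of $c_1^2$ over such extensions equals $4d(L(p,1), i) - 1 = -4d(U_p, i) - 1$, and the same value bounds $c_1(\mathfrak{s}_W)^2$ on $W$. Summing yields \eqref{e: bound}. For the sharp case, I would simultaneously take $\mathfrak{s}_X$ realizing equality for $X$ and $\mathfrak{s}_W$ corresponding to the extremal spin$^c$ structure on $W_{-p}(U)$; the glued $\mathfrak{s}$ then attains equality in \eqref{e: bound}.

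The main obstacle is the bookkeeping check that the identification \eqref{e: spc} of $\Spc(K_p)$ with $\Z/p\Z$ (defined via $W_p(K)$) is compatible with both the restriction $\Spc(W) \to \Spc(-K_p)$ and the analogous identification of $\Spc(U_p)$ (defined via $W_p(U)$), so that the symbol $i$ refers to corresponding spin$^c$ structures on both sides of \eqref{e: bound}. This reduces to checking that $W = -W_p(K)$ carries the same $c_1$-evaluations on the generator of $H_2$ as $W_p(K)$, and that the formula $c_1^2 = -m^2/p$ depends only on the intersection form, both of which are routine once made explicit.
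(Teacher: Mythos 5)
Your proposal is correct and follows essentially the same route as the paper: extend over $X$ and $W$ separately (using torsion-freeness of $H_1$), split $c_1(\mathfrak{s})^2 + (n+1)$ into the two pieces, bound the $X$-piece by \eqref{e: d bound} and the $W$-piece by the sharpness of $W_{-p}(U)$ together with the observation that the attainable values of $c_1(\mathfrak{s}|W)^2$ depend only on the intersection form $\langle -p\rangle$ and not on $K$, then glue extremal extensions for the equality case. You have merely filled in more of the cohomological and bookkeeping details than the paper's proof records explicitly.
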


\begin{proof}

The fact that every $\spc$ structure on $K_p$ extends across $Z$ follows from the fact that $H_1(X)$ and $H_1(W)$ are torsion-free.  Now fix some $i \in \Spc(K_p)$ and an extension $\s \in \Spc(Z)$.  From \eqref{e: d bound} we obtain \[c_1(\s|X)^2 + b_2(X) \leq 4 d(K_p,i).\]   Observe that the maximum value of $c_1(\s|W)^2 + 1$ does not depend on the knot $K$.  Since $W_{-p}(U)$ is sharp, it follows that this value equals $4 d(U_{-p},i)$.  Therefore, \[ c_1(\s|W)^2 + 1 \leq - 4d(U_p,1).\]  Summing these two inequalities results in \eqref{e: bound}.

We obtain equality in \eqref{e: bound} under the assumption that $X$ is sharp by taking an extension of $i$ to some $\s_X \in \Spc(X)$ that attains equality in \eqref{e: d bound} and gluing it to an extension $\s_W \in \Spc(W)$ that attains the value $-4d(U_p,i)$.

\end{proof}

Let $K$ denote an L-space knot.  We aim to use \eqref{e: bound} to obtain information about the knot genus.  Consider the Alexander polynomial of $K$, \[ \Delta_K(T) = \sum_{j=-g}^g a_j \cdot T^j, \quad g := \deg(\Delta_K),\] and define the {\em torsion coefficient} \[t_i(K) = \sum_{j \geq 1} j \cdot a_{|i|+j}.\] Since $K$ is an L-space knot, \cite[Theorem 1.2]{os:lens} implies that the knot Floer homology group $\widehat{HFK}(K)$ is uniquely determined by the Alexander polynomial $\Delta_K$.  In particular, the maximum Alexander grading in which this group is supported is equal to the degree $g$ of $\Delta_K$.  On the other hand, \cite[Theorem 1.2]{os:genus} implies that this grading equals the knot genus: \[ g = g(K). \]  Furthermore, \cite[Theorem 1.2]{os:lens} implies that the non-zero coefficients of the Alexander polynomial take values $\pm 1$ and alternate in sign, beginning with $a_g = 1$.  It follows that for all $i \geq 0$, the quantity \[ t_i(K) - t_{i+1}(K) = \sum_{j \geq 1} a_{i+j} \] is always $0$ or $1$, so the $t_i(K)$ form a sequence of monotonically decreasing, non-negative integers for $i \geq 0$.  Therefore, we obtain \begin{equation}\label{e: max torsion} t_i(K) = 0 \text{ if and only if } |i| \ge g(K).\end{equation}

Owens-Strle state the following result explicitly \cite[Theorem 6.1]{os:lattices}; it slightly extends the case $q=1$ of \cite[Theorem 1.2]{os:rational} (see also \cite[Theorem 8.5]{kmos} or the identical \cite[Corollary 7.5]{os:absgr}).

\begin{thm}\label{t: torsion}

Let $K$ denote an L-space knot and $p$ a positive integer.  Then the torsion coefficients and correction terms satisfy \begin{equation}\label{e: torsion and d} - 2t_i(K) = d(K_p,i) - d(U_p,i), \text{ for all } |i| \leq p/2. \end{equation}

\end{thm}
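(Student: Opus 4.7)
The plan is to apply the integer surgery formula of Ozsv\'ath-Szab\'o, which expresses the Heegaard Floer homology of $K_p$ in terms of subquotients of the knot Floer chain complex $CFK^\infty(K)$, together with the special structure of this complex for L-space knots. Under the identification of Spin$^c$-structures in \eqref{e: spc}, the correction term $d(K_p,i)$ equals the minimal grading in the $\mathcal{T}^+$-tower of $HF^+(K_p,i)$. For $|i| \leq p/2$, the surgery formula identifies this summand with the homology of a ``large surgery'' subquotient $A^+_i \subset CFK^\infty(K)$, built from elements in bifiltration levels $(j,k)$ with $\max(j, k-i) \leq 0$, up to a universal grading shift that depends on $p$ and $i$ but not on $K$.

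Next I would invoke \cite[Theorem 1.2]{os:lens} to pin down the chain homotopy type of $CFK^\infty(K)$: since $K$ is an L-space knot, this type is determined by $\Delta_K$ and takes the form of a staircase whose consecutive step lengths are encoded by the alternating $\pm 1$ coefficient pattern recalled above. From this staircase one reads off $H_*(A^+_i) \cong \mathcal{T}^+$, with the bottom of the tower shifted down by exactly $2V_i$ relative to the base case, where $V_i$ counts the staircase corners sitting above the truncation line at filtration level $i$. A direct bookkeeping argument, using the identity $t_i - t_{i+1} = \sum_{j \geq 1} a_{i+j}$ recalled in the paragraph preceding the theorem, identifies $V_i$ with $t_i(K)$. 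Specializing to $K = U$ gives a trivial staircase, so $V_i = 0 = t_i(U)$ and the universal shift reproduces $d(U_p,i)$; subtracting the two formulas yields $d(K_p,i) - d(U_p,i) = -2t_i(K)$.

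The main obstacle I expect is tracking the sign and grading-shift conventions across the pieces (the surgery formula, the staircase model of $CFK^\infty$, and the definition of $t_i$), and verifying the precise identification $V_i = t_i(K)$. The hypothesis $|i| \leq p/2$ is essential, since outside this range the mapping cone for $HF^+(K_p,i)$ involves contributions from an additional subquotient $B^+$, which would alter the formula. The computation itself is well documented in this bookkeeping-heavy form: the identity appears as \cite[Theorem 6.1]{os:lattices}, with essentially equivalent statements in \cite[Theorem 1.2]{os:rational}, \cite[Theorem 8.5]{kmos}, and \cite[Corollary 7.5]{os:absgr}, so the proof ultimately reduces to assembling these ingredients.
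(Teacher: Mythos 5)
The paper does not actually prove Theorem \ref{t: torsion}: it is quoted directly from Owens--Strle \cite[Theorem 6.1]{os:lattices} (with the other equivalent statements in \cite{os:rational,kmos,os:absgr} that you yourself list), so there is no internal argument to compare against, and your sketch is the standard proof underlying those citations. It is essentially correct, but two points need care. First, $A_i^+$ is the quotient complex $C\{\max(j,k-i)\ge 0\}$ of $CFK^\infty(K)$, not the collection of elements with $\max(j,k-i)\le 0$; as written you have described (roughly) the subcomplex that is quotiented out. Second, the clean identification of $HF^+(K_p,i)$ with $H_*(A_i^+)$ up to a universal shift is the \emph{large surgery} theorem and requires $p\ge 2g(K)-1$, whereas the statement allows arbitrary positive $p$; for small $p$ one must truncate the full integer-surgery mapping cone, where the bottom of the tower in $\Spc$ structure $i$ is governed by $\max(V_i, V_{p-i})$ (equivalently $\max(V_i,H_{i-p})$), and the hypothesis $|i|\le p/2$ is precisely what forces this maximum to equal $V_i=t_i(K)$ via the monotonicity of the $V$'s. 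With those corrections the bookkeeping goes through exactly as you describe.
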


In \cite[p. 538]{kmos}, the stated version of Theorem \ref{t: torsion} is used in conjunction with \eqref{e: max torsion} to enumerate the lens spaces obtained by surgery along a knot $K$ with genus $g(K) \leq 5$.  By using this approach in tandem with Lemma \ref{l: bound}, we will obtain the estimates presented in Theorem \ref{t: main}.  To that end, we focus our attention to the left-hand side of \eqref{e: bound}.  Donaldson's theorem implies that $H_2(Z) \cong - \Z^{n+1}$, where $-\Z^{n+1}$ denotes the integer lattice equipped with {\em minus} the standard Euclidean inner product \cite{d:thma}.  Choose an orthonormal basis $\{e_0,\dots,e_n \}$ for $-\Z^{n+1}$: $\langle e_i, e_j \rangle = - \delta_{ij}$ for all $i,j$.  The first Chern class map \[c_1: \Spc(Z) \to H^2(Z)\] has image the set of {\em characteristic covectors} for the inner product space $H_2(Z)$.  Identify $H_2(Z) \cong H^2(Z)$ by Poincar\'e duality; then Donaldson's theorem implies that this set corresponds to \[ \Char(-\Z^{n+1}) = \left\{ \c = \sum_{i=0}^n \c_i e_i \; | \; \c_i \text{ odd for all } i \right\}. \]  Write \[ \sigma = \sum_{i=0}^n \sigma_i e_i \] for the image of the class $[\Sigma]$ under the inclusion $H_2(W) \hookrightarrow H_2(Z) \cong - \Z^{n+1}$.

With the preceding notation in place, the following Lemma follows on combination of Lemma \ref{l: bound} with Theorem \ref{t: torsion}.

\begin{lem}\label{l: c}

Let $K$ denote an L-space knot, and suppose that $K_p$ bounds a smooth, negative definite 4-manifold $X$ with $H_1(X)$ torsion-free.   Then \begin{equation}\label{e: c} \c^2 + (n+1) \leq -8 t_i(K) \end{equation} for all $|i| \leq p/2$ and $\c \in \Char(-\Z^{n+1})$ such that $\langle \c, \sigma \rangle + p \equiv 2i \; (\mod 2p)$.  Furthermore, if $X$ is sharp, then for every $|i| \leq p/2$ there exists $\c$ that attains equality in \eqref{e: c}. \qed

\end{lem}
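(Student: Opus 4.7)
The plan is to combine Lemma \ref{l: bound} with Theorem \ref{t: torsion}, and then translate the resulting inequality from $\spc$ structures on the closed manifold $Z$ to characteristic covectors of its intersection form using Donaldson's theorem. This is essentially a bookkeeping exercise, which is why the statement is phrased as an immediate consequence of the two preceding results.

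First I would substitute Theorem \ref{t: torsion} into Lemma \ref{l: bound}. For $|i|\leq p/2$, the identity $d(K_p,i)-d(U_p,i) = -2t_i(K)$ turns the right-hand side of \eqref{e: bound} into $-8t_i(K)$, so for every extension $\s\in\Spc(Z)$ of $i\in\Spc(K_p)$ one obtains
\begin{equation*}
c_1(\s)^2 + (n+1) \leq -8t_i(K).
\end{equation*}

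Next I would reinterpret this in the lattice language set up just before the lemma. Donaldson's theorem identifies $(H_2(Z),Q_Z)\cong -\Z^{n+1}$, and Poincar\'e duality lets me identify $c_1(\s)\in H^2(Z)$ with an element of $-\Z^{n+1}$; the image is exactly a characteristic covector $\c\in\Char(-\Z^{n+1})$, with $c_1(\s)^2 = \c^2$. So the inequality above is precisely \eqref{e: c}, provided I match up the set of admissible $\c$ with the set of $\s\in\Spc(Z)$ extending a given $i$. Under the correspondence \eqref{e: spc}, the condition that $\s|_{W}$ restricts to the class $i\in\Spc(K_p)$ is $\langle c_1(\s),[\Sigma]\rangle + p \equiv 2i\pmod{2p}$. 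The inclusion $H_2(W)\hookrightarrow H_2(Z)$ sends $[\Sigma]$ to $\sigma$ and is isometric for the relevant pairings, so this congruence becomes $\langle \c,\sigma\rangle + p\equiv 2i\pmod{2p}$, matching the hypothesis on $\c$ in the lemma. (The existence of some extension $\s$ for each $i$, guaranteed by Lemma \ref{l: bound}, ensures that for each admissible $i$ there is at least one $\c$ in the indicated residue class — and conversely every such $\c$ arises from some $\s$ extending $i$.)

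For the sharpness clause, I would simply invoke the second half of Lemma \ref{l: bound}: if $X$ is sharp, then for each $i$ with $|i|\leq p/2$ there exists an extension $\s$ realizing equality in \eqref{e: bound}, and the corresponding characteristic covector $\c$ realizes equality in \eqref{e: c}. The only point that requires any care is verifying that the residue condition on $\langle \c,\sigma\rangle$ is the genuine translation of the $\spc$-extension condition, but this is built into the way the bijection \eqref{e: spc} was defined, so I expect no real obstacle.
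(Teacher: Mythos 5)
Your proposal is correct and follows exactly the route the paper intends: the paper gives no separate proof, stating only that the lemma ``follows on combination of Lemma \ref{l: bound} with Theorem \ref{t: torsion},'' and your substitution of $d(K_p,i)-d(U_p,i)=-2t_i(K)$ into \eqref{e: bound} together with the Donaldson-theorem translation of $\spc$ structures on $Z$ into characteristic covectors is precisely that combination.
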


%%%%%%%%%%%
%%%%%%%%%%%
%%%%%%%%%%%

\section{The genus bounds}\label{s: inequalities}

We proceed to establish the bounds appearing in Theorem \ref{t: main}.  Both bounds stem from the following result, whose proof and application are elementary.

\begin{prop}\label{p: genus}

Under the hypotheses of Lemma \ref{l: c}, \begin{equation}\label{e: genus}  2g(K) \leq p - |\sigma|_1, \end{equation} with equality if $X$ is sharp.  Here $|\sigma|_1$ denotes the $L^1$ norm $\sum_{i=0}^n |\sigma_i|$.

\end{prop}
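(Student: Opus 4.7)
The plan is to deduce \eqref{e: genus} from a single application of Lemma \ref{l: c} with a carefully chosen characteristic covector, and to upgrade to equality in the sharp case via the ``furthermore'' clause of that lemma at $i = g(K)$.

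First I would extract two elementary facts about $\sigma$ from the identity $\sum_{j=0}^n \sigma_j^2 = p$ (itself a consequence of $\sigma^2 = [\Sigma]^2 = -p$ in the diagonalized lattice $H_2(Z) \cong -\Z^{n+1}$). Since each $\sigma_j \in \Z$, the pointwise inequality $\sigma_j^2 \geq |\sigma_j|$ sums to give $|\sigma|_1 \leq p$, and the congruence $\sigma_j^2 \equiv \sigma_j \equiv |\sigma_j| \pmod 2$ sums to give $|\sigma|_1 \equiv p \pmod 2$. Armed with these, I would take $\c \in \{\pm 1\}^{n+1} \subset \Char(-\Z^{n+1})$ with $\c_j = \mathrm{sgn}(\sigma_j)$, using the convention $\mathrm{sgn}(0) := 1$. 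Then $\c^2 = -(n+1)$ and $\langle \c, \sigma \rangle = -|\sigma|_1$, so the $\spc$-structure associated to $\c$ under \eqref{e: spc} is $i := (p - |\sigma|_1)/2$, an integer in $[0, p/2]$. Plugging into \eqref{e: c} yields $0 \leq -8\, t_i(K)$, hence $t_i(K) = 0$; by \eqref{e: max torsion} this forces $|i| \geq g(K)$, which rearranges to \eqref{e: genus}.

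For equality under the sharpness hypothesis, I would apply the ``furthermore'' clause of Lemma \ref{l: c} at $i' = g(K)$, which is valid since $g(K) \leq p/2$ by the inequality just proved. This supplies $\c' \in \Char(-\Z^{n+1})$ with $(\c')^2 + (n+1) = -8\, t_{g(K)}(K) = 0$, hence $\c' \in \{\pm 1\}^{n+1}$, and satisfying $\langle \c', \sigma \rangle + p \equiv 2 g(K) \pmod{2p}$. Since $\langle \c', \sigma \rangle + p \in [p - |\sigma|_1, p + |\sigma|_1] \subseteq [0, 2p]$, a brief case analysis of the mod-$2p$ representatives of $2g(K)$ in this interval forces $\langle \c', \sigma \rangle = 2g(K) - p$, which in turn demands $p - 2g(K) \leq |\sigma|_1$. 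Combined with \eqref{e: genus}, this yields $2g(K) = p - |\sigma|_1$.

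The main obstacle is the mod-$2p$ bookkeeping, and specifically the observation that $|\sigma|_1 \leq p$: without it, the reduction of $\langle \c, \sigma \rangle + p$ modulo $2p$ could land in a different interval and produce the ``wrong'' bound on $|i|$. The sharpness step also needs care to isolate the correct mod-$2p$ representative, with a mild degenerate subcase when $g(K) = 0$ and $|\sigma|_1 = p$ that must be checked separately to confirm the equality.
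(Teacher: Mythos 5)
Your proposal is correct and follows essentially the same route as the paper: apply Lemma \ref{l: c} to vectors in $\{\pm 1\}^{n+1}$, use $|\langle \c,\sigma\rangle| \leq |\sigma|_1 \leq p$ to convert the mod-$2p$ congruence into an equality, and invoke the vanishing of $t_i(K)$ together with \eqref{e: max torsion}; the only cosmetic difference is that you plug in the sign vector directly rather than first proving $2g(K) \leq \langle \c,\sigma\rangle + p$ for all $\c \in \{\pm 1\}^{n+1}$ and then minimizing. Your spelled-out sharpness argument at $i = g(K)$ is exactly what the paper's ``follows immediately'' elides.
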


\begin{proof}

Select a value $0 \leq i \leq p/2$ for which the left-hand side of \eqref{e: c} vanishes.  Hence there exists $\c \in \{ \pm 1 \}^{n+1} \subset - \Z^{n+1}$ for which $\langle \c, \sigma \rangle + p \equiv 2i \; (\mod 2p)$.  On the other hand, we have $|\langle \c, \sigma \rangle| \leq | \langle \sigma,\sigma \rangle | = p$, so that in fact $\langle \c, \sigma \rangle + p = 2i$.  Now, the assumption on $i$ and the non-negativity of the torsion coefficients together imply that $t_i(K) = 0$, so that in fact $i \geq g(K)$ by \eqref{e: max torsion}.  It follows that for all $\c \in \{ \pm 1 \}^{n+1}$, we have \begin{equation}\label{e: genus'} 2g(K) \leq \langle \c, \sigma \rangle + p.\end{equation}  The minimum value of the right-hand side of \eqref{e: genus'} is attained by the {\em sign vector} $s(\sigma)$, defined by \[ s(\sigma)_j := \begin{cases} +1, & \text{if } \sigma_j \geq 0; \\  -1, & \text{otherwise.} \end{cases} \]  For it, \eqref{e: genus'} produces the desired bound \eqref{e: genus}.  The equality under the assumption that $X$ is sharp follows immediately.

\end{proof}

The bound \eqref{e: genusbound 1} in Theorem \ref{t: main} follows at once from Proposition \ref{p: genus} and the trivial inequality $p = |\langle \sigma,\sigma \rangle | \leq |\sigma|_1^2$.  Now suppose that $X$ is sharp.  Then $2g(K) = p - |\sigma|_1$ by Proposition \ref{p: genus}, and its proof extends to show that for all $p-|\sigma|_1 \leq 2i \leq p$, there exists $\c \in \{ \pm 1 \}^{n+1}$ with $\langle \c, \sigma \rangle + p = 2i$.  Replacing any such $\c$ by its negative, we obtain this fact for all $p-|\sigma|_1 \leq 2i \leq p+|\sigma|_1$.  In other words, for all $-|\sigma|_1 \leq j \leq |\sigma|_1$ with $j \equiv p \equiv |\sigma|_1 \; (\mod 2)$, there exists $\c \in \{ \pm 1 \}^{n+1}$ for which $\langle \c, \sigma \rangle = j$.  By a change of basis of $-\Z^{n+1}$, we may assume that the vector $\sigma$ has the property that \[ 0 \leq \sigma_0 \leq \cdots \leq \sigma_n. \]  Write a vector $\c \in \{ \pm 1 \}^{n+1}$ in the form $(-1,\dots,-1) + 2 \chi$, where $\chi \in \{0,1\}^{n+1}$.  Then we obtain that for every $0 \leq k \leq |\sigma|_1$, there exists $\chi \in \{0,1\}^{n+1}$ for which $|\langle \chi, \sigma \rangle| = - \langle \chi, \sigma \rangle = k$.    In other words, for every such $k$, there exists a subset $S \subset \{ 0, \dots, n \}$ for which $\sum_{i \in S} \sigma_i = k$.

\begin{lem}\label{l: change}

Consider a sequence of integers $0 \leq \sigma_0 \leq \cdots \leq \sigma_n.$  For every value $0 \leq k \leq \sigma_1 + \cdots + \sigma_n$, there exists a subset $S \subset \{0,\dots,n\}$ such that $\sum_{i \in S} \sigma_i = k$ if and only \begin{equation}\label{e: change} \sigma_i \leq \sigma_0 + \cdots + \sigma_{i-1} + 1 \text{  for all  } 1 \leq i \leq n. \end{equation}

\end{lem}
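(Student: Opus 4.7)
The statement is a version of Brown's classical criterion for when an increasing sequence of non-negative integers represents every value in an initial interval as a subset sum. The plan is to prove the two implications separately: necessity by a direct contrapositive, and sufficiency by induction on $n$.

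For necessity, suppose \eqref{e: change} fails at some index $i$, so $\sigma_i \geq \sigma_0 + \cdots + \sigma_{i-1} + 2$, and set $k := \sigma_0 + \cdots + \sigma_{i-1} + 1$. One first verifies that $k$ lies in the stipulated range of representable values. Any representing subset $S \subseteq \{0, \ldots, n\}$ cannot contain an index $j \geq i$, since $\sigma_j \geq \sigma_i > k$; but then $S \subseteq \{0, \ldots, i-1\}$ forces $\sum_{j \in S} \sigma_j \leq \sigma_0 + \cdots + \sigma_{i-1} = k - 1 < k$, contradicting the hypothesis that $k$ is a subset sum.

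For sufficiency, assume \eqref{e: change} and induct on $n$. Fix $k$ in the target interval. If $k \leq \sigma_0 + \cdots + \sigma_{n-1}$, apply the inductive hypothesis to the truncated sequence $\sigma_0, \ldots, \sigma_{n-1}$, which inherits \eqref{e: change}. Otherwise $k > \sigma_0 + \cdots + \sigma_{n-1}$, and I consider $k' := k - \sigma_n$. From the previous bound, $k' \leq \sigma_0 + \cdots + \sigma_{n-1}$; from \eqref{e: change} at $i = n$, we have $\sigma_n \leq \sigma_0 + \cdots + \sigma_{n-1} + 1 \leq k$, so $k' \geq 0$. Induction then supplies $S' \subseteq \{0, \ldots, n-1\}$ with $\sum_{j \in S'} \sigma_j = k'$, and $S := S' \cup \{n\}$ represents $k$.

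The content of the lemma is concentrated in the sufficiency direction: condition \eqref{e: change} at index $i$ is precisely what is needed to guarantee that no gap arises in the subset-sum coverage as one adjoins $\sigma_i$ to the sums already produced by $\{\sigma_0, \ldots, \sigma_{i-1}\}$, and necessity confirms that this condition is also sharp. I expect the only real delicacy to be careful bookkeeping at the boundary of the target interval and in the base case of the induction, where the structure of the smallest terms of the sequence must be matched against the precise endpoint appearing in the statement.
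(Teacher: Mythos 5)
Your proof follows essentially the same route as the paper's: necessity via the identical contrapositive (take $k = \sigma_0 + \cdots + \sigma_{i-1} + 1$ at a failing index $i$ and observe no subset can hit it), and sufficiency by induction on $n$. The only difference in the sufficiency step is cosmetic --- the paper greedily subtracts $\sigma_j$ for the largest $j$ with $k \geq \sigma_0 + \cdots + \sigma_{j-1}+1$, whereas you always split on whether $\sigma_n$ is needed; both reduce to the same inductive hypothesis. However, the ``delicacy'' you defer at the base case and at the endpoint of the interval is not mere bookkeeping. In your first case you invoke the inductive hypothesis for all $k' \leq \sigma_0 + \cdots + \sigma_{n-1}$, but the lemma as stated only guarantees representability of values up to $\sigma_1 + \cdots + \sigma_{n-1}$ for the truncated sequence; and the base case genuinely fails for, say, $(\sigma_0,\sigma_1) = (2,3)$, which satisfies \eqref{e: change} yet cannot represent $k = 1 \leq \sigma_1$. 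The induction closes only if one strengthens the claim to cover all $k \leq \sigma_0 + \cdots + \sigma_n$, which in turn requires $\sigma_0 \leq 1$, i.e.\ extending \eqref{e: change} to $i = 0$ with the empty sum. This is an off-by-one in the statement of the lemma itself: the paper's own proof of this direction has the same unacknowledged issue, and it is harmless in the application, where the subset-sum property is known up to $|\sigma|_1$ and hence forces $\sigma_0 \leq 1$. But since you explicitly flagged and postponed exactly this point, be aware that it is where the statement, as literally written, actually breaks, and your write-up does not yet repair it.
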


If we imagine the $\sigma_i$ as values of coins, then Lemma \ref{l: change} provides a necessary and sufficient condition under which one can make exact change from them in any amount up to their total value.  We call such a vector $\sigma = (\sigma_0,\dots,\sigma_n)$ a {\em changemaker} (cf. \cite{greene:3braids}); the concept was apparently first introduced under the term {\em complete sequence} in \cite{brown:changemaker, hks:changemaker}.  Before proceeding to the proof of Lemma \ref{l: change}, we enunciate what we have just established.

\begin{thm}\label{t: changemaker}

Let $K \subset S^3$ denote an L-space knot and suppose that $K_p$ bounds a sharp 4-manifold $X$.  Then $H_2(X) \oplus H_2(W)$ embeds as a full-rank sublattice of $- \Z^{n+1}$, where $n = b_2(X)$ and the generator of $H_2(W)$ maps to a changemaker $\sigma$ with $\langle \sigma,\sigma \rangle = -p$. \qed

\end{thm}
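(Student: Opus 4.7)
The proof is essentially a packaging of results already established in Sections 2 and 3 of the excerpt, so the plan is simply to assemble the ingredients in the right order, with no new content required.

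First I would recall the construction from just before Lemma \ref{l: bound}: set $Z = X \cup W$ with $W \cong W_{-p}(\overline{K})$, observe that this is a closed, smooth, oriented, negative definite 4-manifold of rank $n+1$ (using $b_1(K_p)=0$), and invoke Donaldson's theorem to identify $H_2(Z) \cong -\Z^{n+1}$. The inclusions $X, W \hookrightarrow Z$ induce a full-rank embedding $H_2(X) \oplus H_2(W) \hookrightarrow H_2(Z) \cong -\Z^{n+1}$ of inner-product lattices, which is exactly the lattice embedding claimed in the theorem. The image $\sigma$ of a generator of $H_2(W) \cong \Z$ satisfies $\langle \sigma, \sigma \rangle = -p$, since that generator is represented by the class $[\Sigma]$ built from the core of the $-p$-framed 2-handle capped off with a pushed-in Seifert surface.

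For the changemaker property I would cite directly the analysis carried out in the paragraph immediately preceding Lemma \ref{l: change}. Under the sharpness hypothesis, the equality case of Proposition \ref{p: genus} was there extended to show that, after a lattice automorphism of $-\Z^{n+1}$ sorting the coordinates so that $0 \leq \sigma_0 \leq \cdots \leq \sigma_n$, every integer $k$ in the range $0 \leq k \leq |\sigma|_1$ is representable as $\sum_{i \in S} \sigma_i$ for some subset $S \subset \{0, \ldots, n\}$. Lemma \ref{l: change} then translates this subset-sum property into the defining changemaker inequalities $\sigma_i \leq \sigma_0 + \cdots + \sigma_{i-1} + 1$, completing the proof.

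The main point that might trip one up — and the only step meriting care — is verifying that the lattice automorphism used to sort the $\sigma_i$ genuinely lies in $\mathrm{Aut}(-\Z^{n+1})$. But coordinate permutations combined with sign changes of the standard orthonormal basis already suffice, and these are automorphisms of $-\Z^{n+1}$; they transport the given embedding of $H_2(X) \oplus H_2(W)$ to a new, equally valid embedding under which the image of the generator of $H_2(W)$ has the sorted nonnegative form required by the definition of a changemaker.
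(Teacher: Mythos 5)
Your proposal is correct and matches the paper's own treatment: Theorem \ref{t: changemaker} is stated with a \qed precisely because it packages the Donaldson-theorem embedding of $H_2(X)\oplus H_2(W)$ into $-\Z^{n+1}$ together with the subset-sum property derived from sharpness in the paragraph preceding Lemma \ref{l: change}, exactly as you assemble it. Your added remark that the sorting of the coordinates of $\sigma$ is effected by a genuine automorphism of $-\Z^{n+1}$ (permutations and sign changes) is a correct and harmless elaboration of the paper's brief ``by a change of basis'' step.
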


\begin{proof}[Proof of Lemma \ref{l: change}]

($\implies$)  We proceed by induction on $n$,  The statement is obvious when $n = 1$. For the induction step, select any value $1 \leq k \leq \sigma_1 + \cdots + \sigma_n$, and pick the largest value $j$ for which $k \geq \sigma_0 + \cdots + \sigma_{j-1}+1$.  By \eqref{e: change}, $k -\sigma_j \geq 0$, and by the choice of $j$, we have $k - \sigma_j \leq \sigma_0 + \cdots + \sigma_{j-1}$. By induction on $n$, there exists $S' \subset \{1,\dots,j-1 \}$ (possibly the empty set) for which $\sum_{i \in S'} \sigma_i = k - \sigma_j$; now $S = S' \cup \{ j \}$ provides the desired subset with $\sum_{i \in S} \sigma_i = k$.

\noindent ($\impliedby$)  We establish the contrapositive statement.  If the inequality \eqref{e: change} failed for some $i$, then let $k$ denote the value $\sigma_0 + \cdots + \sigma_{i-1} + 1$.  For any $S \subset \{ 1,\dots,n \}$, either $j < i$ for all $j \in S$, in which case $\sum_{j \in S} \sigma_j < k$, or there exists some $j \in S$ with $j\geq i$, in which case $\sum_{j \in S} \sigma_j \geq \sigma_i > k$.  Therefore, there does not exist a subset $S$ such that $\sum_{j \in S} \sigma_j = k$.
\end{proof}

Returning to the case at hand, we appeal to Lemma \ref{l: change} and invoke the inequality \eqref{e: change} for each $1 \leq i \leq n$ to obtain the estimate \begin{eqnarray*}
(\sigma_1 + \cdots + \sigma_n + 1)^2
&=& 1 + \sum_{i=1}^n \sigma_i^2 + 2 \sigma_i (\sigma_0+\cdots+\sigma_{i-1}+1) \cr \\
&\geq& 1 + \sum_{i=1}^n 3 \sigma_i^2 = 3p + 1.
\end{eqnarray*}  It follows that $|\sigma|_1 \geq \sqrt{3p+1}-1$, and on combination with the equality in  \eqref{e: genus} we obtain the desired bound \eqref{e: genusbound 2}.

%%%%%%%%%%
%%%%%%%%%%
%%%%%%%%%%

\section{Iterated cables}\label{s: iterated cable}

In this section, we prove the final assertion of Theorem \ref{t: main}.

\begin{prop}\label{p: iterated cable}

Let $p_0 = 0$, and for $n \geq 0$, inductively define $a_{n+1} = 2p_n+1$ and $p_n = 2a_n-1$. Let $K_n$ denote the $(2,a_n)$-cable of the $(2,a_{n-1})$-cable of the $\quad \cdots \quad (2,a_1)$-cable of the unknot.  Then for all $n \geq 1$, $p_n$-surgery along $K_n$ is an L-space which bounds a sharp 4-manifold, and the bound in Equation \eqref{e: genusbound 2} is attained in this case.

\end{prop}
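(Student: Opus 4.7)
The plan is to address the three claims of the proposition in turn: that $K_n$ is an L-space knot with $p_n$-surgery along it an L-space, that $(K_n)_{p_n}$ bounds a sharp $4$-manifold, and that equality holds in \eqref{e: genusbound 2}.

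First I would dispatch the easy numerics.  Write $G_n := 2g(K_n) - 1$.  The genus formula $g(C_{2, q}(J)) = 2 g(J) + (q-1)/2$ gives $G_{n+1} = 2 G_n + a_{n+1}$, while the definitions force $p_{n+1} = 2 a_{n+1} - 1 = 4 p_n + 1$.  Combined with the base case $K_1 = $ unknot (so $p_1 = 1$, $G_1 = -1$), a short induction yields $p_n - G_n = 2^n$ and $3 p_n + 1 = 4^n$ for every $n \geq 1$.  In particular $\sqrt{3 p_n + 1} = 2^n = p_n - G_n$, which is exactly the equality in \eqref{e: genusbound 2}.  For the L-space-knot claim, I would invoke Hedden's criterion that the $(2, q)$-cable of $J$ is an L-space knot precisely when $J$ is one and $q \geq 2(2 g(J) - 1)$; since $a_{n+1} = 2 p_n + 1 > 2 G_n$, the property propagates inductively from $K_1$ to every $K_n$, and the estimate $p_n \geq G_n$ then ensures via \eqref{e: kmos} that $(K_n)_{p_n}$ is an L-space.

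The main obstacle is exhibiting the sharp $4$-manifold $X_n$.  Here I would proceed inductively, using the fact that the complement of the iterated torus knot $K_n$ has a canonical JSJ decomposition into Seifert fibered pieces, so that $(K_n)_{p_n}$ is a graph manifold carrying a natural negative-definite plumbing description.  Starting from $X_1 = D^4$ and the chain $[-2,-2,-2,-2,-2]$ bounding $L(5,4) = (K_2)_{p_2}$, the inductive step should amount to attaching a controlled block of vertices with weights $\leq -2$ to the plumbing for $(K_{n-1})_{p_{n-1}}$, corresponding to the Seifert fibered piece introduced by the $(2, a_n)$ cabling at the surgery slope $p_n = 2 a_n - 1$ (which sits one unit below the cabling slope $2 a_n$, ensuring irreducibility).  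Provided the resulting plumbing is a negative-definite tree satisfying $-e_v \geq \deg(v)$ at every vertex, the associated plumbed $4$-manifold is sharp by \cite[Corollary 1.5]{os:plumbed}.  The delicate part is writing down the explicit surgery presentation at each stage and checking that this local sharpness criterion is preserved under the inductive move.  An alternative, more topological route would be to identify each $(K_n)_{p_n}$ as the branched double-cover of an alternating link and apply Proposition \ref{p: sigma(K)}, which is transparent in the base cases $n = 1, 2$.
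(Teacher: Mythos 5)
Your reduction of the genus/numerics to the recursions $p_{n+1}=4p_n+1$, $G_{n+1}=2G_n+a_{n+1}$ (hence $3p_n+1=4^n$ and $p_n-G_n=2^n$) is correct and matches what the paper relegates to one line (``the behavior of the knot genus under cabling''). Your derivation of the L-space property via Hedden's cabling criterion together with the surgery characterization of L-space knots is also a valid route, different from the paper's: the paper gets the L-space property for free from Proposition \ref{p: sigma(K)} because it realizes $(K_n)_{p_n}$ as the branched double cover of an explicit alternating knot $\kappa_n$.

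The genuine gap is in the central claim, the existence of the sharp $4$-manifold. Your plan --- write $(K_n)_{p_n}$ as a negative-definite plumbing tree with $-e_v\geq \deg(v)$ everywhere and quote \cite[Corollary 1.5]{os:plumbed} --- is only asserted, and you yourself flag ``the delicate part'' of exhibiting the plumbing and checking the vertex condition inductively; that delicate part is the whole content of the claim. There are concrete reasons not to take it on faith: the negative-definite filling the paper actually produces is handle attachment along a chain of $(-5)$-framed unknots with consecutive linking number $2$, which is not a standard plumbing tree at all, so the ``natural'' plumbing description of this graph manifold and its bad-vertex count are not evident; moreover your base case is already off (a linear chain of $(-2)$'s of length five bounds a lens space of order $6$, not $L(5,4)$, and the paper's sharp filling of $(K_2)_{p_2}$ has $b_2=1$, namely the $-5$-disk bundle). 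The paper's proof instead runs the construction backwards: it starts from the alternating knot $\kappa_n$, obtains the L-space and sharp filling immediately from Proposition \ref{p: sigma(K)}, and then spends its effort on a tangle-by-tangle induction (tracking the curves $\widetilde{\gamma_i}$, $\widetilde{\delta_i}$ in the branched cover) to identify $\Sigma(\kappa_n)$ with $p_n$-surgery on the iterated cable $K_n$. You mention this branched-cover route in your last sentence as an alternative; to complete the proof you would need to carry out either that identification or the explicit plumbing computation --- neither is done here.
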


We proceed by constructing a sharp 4-manifold $X_n$ for which $\del X_n = \Sigma(\kappa_n)$ for a particular alternating knot $\kappa_n$.  It follows quickly from the presentation of $\kappa_n$ that $\Sigma(\kappa_n) = (K_n)_{p_n}$ for {\em some} knot $K_n$ and slope $p_n \in \Q$.  The bulk of the argument consists in identifying the pair $(K_n,p_n)$ with the one stated in the Proposition.

\begin{proof}

\begin{figure}
\centering
\includegraphics[width=4in]{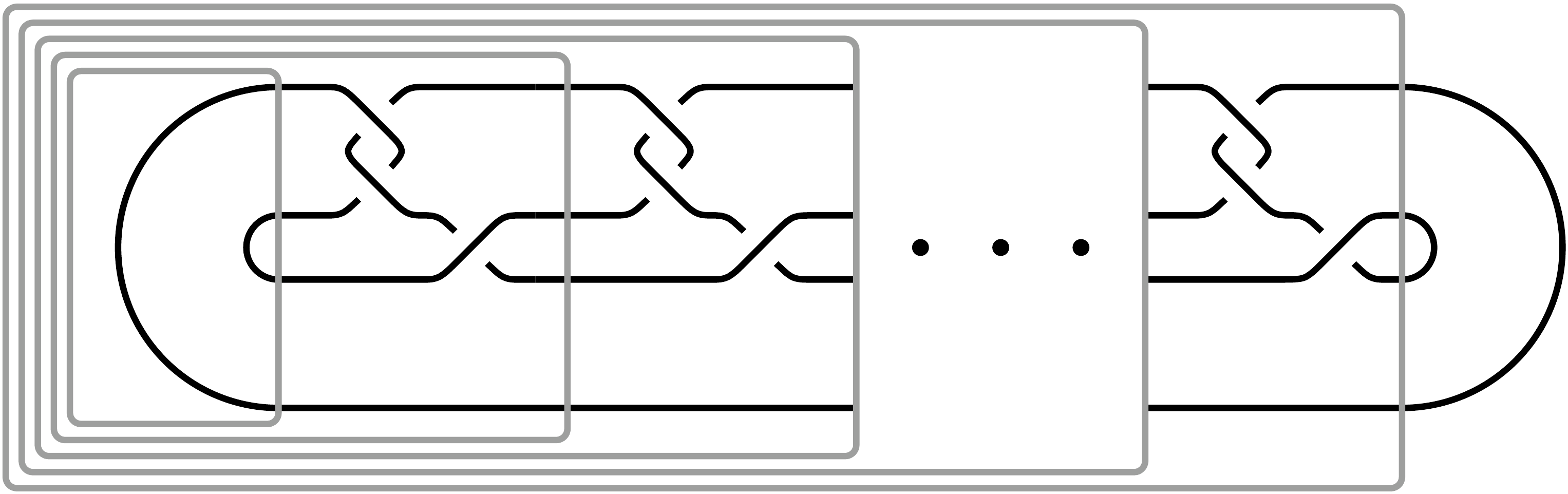}
\caption{The knot $\kappa_n$.}  \label{f: kappa}
\end{figure}

Let $\kappa_n$ denote the alternating knot depicted in Figure \ref{f: kappa}.  It contains $n$ copies of the tangle $\T$ displayed in Figure \ref{f: tangle}.  According to Proposition \ref{p: sigma(K)}, $\Sigma(\kappa_n)$ is an L-space for all $n \geq 1$, and there exists a sharp 4-manifold $X_n$ with $\del X_n = \Sigma(\kappa_n)$.  This space admits an explicit Kirby calculus description by attaching 2-handles along a framed link $\L_n \subset S^3 = \del D^4$.  Here $\L_n$ denotes a linear chain of $n-1$ unknots, with each component framed by $-5$ and oriented clockwise, and with each consecutive pair in the chain linked twice positively.  This Kirby description begins from the one described on \cite[p. 719]{os:unknotting}, replacing each 1-handle by a 0-framed unknot, and blowing down the $(-1)$-curves.

\begin{figure}
\centering
\includegraphics[width=3.5in]{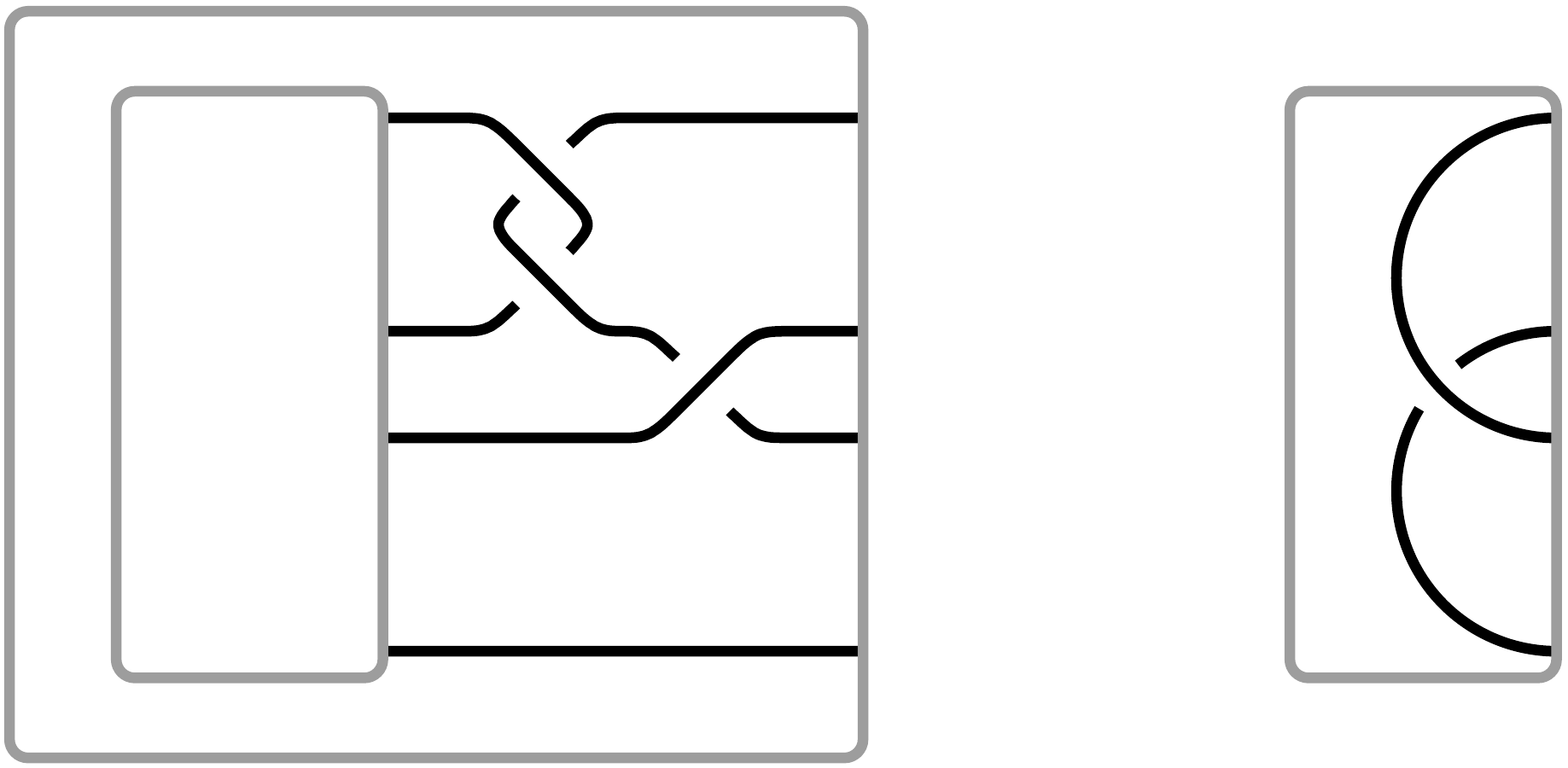}
\caption{A pair of tangles $\T$ and $\T'$.}  \label{f: tangle}
\end{figure}

The space $\Sigma(\T)$ is the (unique) Seifert-fibered space over the annulus with a single exceptional fiber of multiplicity 2.  Equivalently, it is homeomorphic to a $C_{2,q}$ cable space (here $q$ can denote any odd number).  In Figure \ref{f: tanglecurves} we redraw $\T$ with emphasis on a collection of arcs drawn on its boundary.  Filling along the preimage $\widetilde{\gamma_1}$ in $\Sigma(\T)$ produces a solid torus with meridian given by $\widetilde{\delta_1}$.  Observe that by filling $\T$ with the other tangle $\T'$ in Figure \ref{f: tangle}, we obtain a tangle isotopic as a marked tangle to $\T'$ itself.  Let $\T_n$ denote the complement to the inner-most tangle in the picture for $\kappa_n$.  By construction, one rational filling of $\T_n$ produces $\kappa_n$, while filling with $\T'$ produces the unknot.  It follows that the space $\Sigma(\T_n)$ is the complement of {\em some} knot $K_n \subset S^3$ for which $p_n$-surgery produces $\Sigma(\kappa_n)$ for {\em some} $p_n \in \Q$.  Identify the picture of $\T$ in Figure \ref{f: tanglecurves} with the inner-most copy appearing in the diagram for $\kappa_n$.

We claim that for all $n \geq 0$,

\begin{enumerate}

\item the pair $(K_n,p_n)$ agrees with the pair stated in the Proposition;

\item the curve $\widetilde{\gamma_1}$ represents a meridian $\mu$ for $K_n$; and 

\item the curve $\widetilde{\gamma_2}$ represents $p_n \cdot \mu + \lambda$, where $\lambda$ denotes the Seifert-framed longitude of $K_n$, and $\mu$ and $\lambda$ are oriented so that $\langle \mu, \lambda \rangle = + 1$.

\end{enumerate}

\begin{figure}
\centering
\includegraphics[width=2.5in]{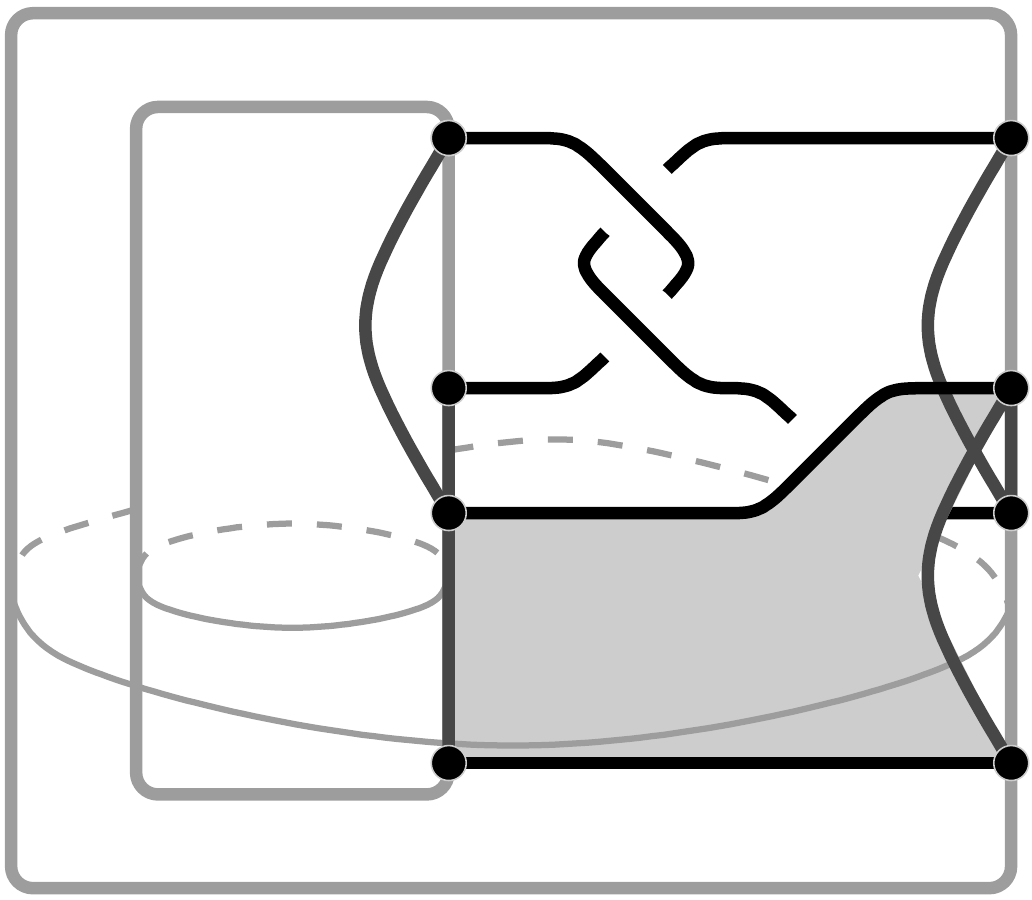}
\put(-130,100){$\gamma_1$}
\put(-90,75){$\gamma_2$}
\put(-115,40){$\gamma_3$}
\put(-30,100){$\delta_1$}
\put(0,75){$\delta_2$}
\put(-30,55){$\delta_3$}
\put(-60,45){$D$}
\caption{Arcs on the boundary of $\T$.}  \label{f: tanglecurves}
\end{figure}

We proceed by induction on $n$.  When $n=0$, $\kappa_0$ is a two-component unlink, and $\Sigma(\kappa_0) = S^1 \times S^2$.  Assertions (1)-(3) follow easily by direct inspection.  Now assume that $n > 0$.  The space $\Sigma(\T_n)$ consists of filling $\Sigma(\T_{n-1})$ with the cable space $\Sigma(\T)$, where a meridian $\widetilde{\delta_1}$ of $\Sigma(\T)$ gets identified with the meridian $\mu'$ of $K_{n-1}$.  It follows at once that $\Sigma(\T_n)$ is the complement of some 2-cable of $K_{n-1}$; it stands to determine which precisely.  Observe that $\widetilde{\gamma_1}$ is a meridian $\mu$ for $K_n$ since filling along it produces $S^3$.  Also,  $\widetilde{\gamma_3}$ is a longitude for $K_n$ since it meets $\mu$ in a single point.  Furthermore, the annulus $\Sigma(D)$ connects $\widetilde{\gamma_3}$ with $\widetilde{\delta_3}$, which is a cable of $K_{n-1}$.  Let $\lambda'$ denote the Seifert-framed longitude of $K_{n-1}$, oriented so that $\langle \mu', \lambda' \rangle = +1$.  Then for one of the orientations on $\widetilde{\delta_3}$, we have \[ \langle  \mu', \widetilde{\delta_3} \rangle = \langle \widetilde{\delta_1}, \widetilde{\delta_3} \rangle = 2 \] and \[ \langle \widetilde{\delta_3}, \lambda' \rangle = \langle \widetilde{\delta_3},  \widetilde{\delta_2} - p_{n-1} \cdot \mu' \rangle = 1+ 2p_{n-1} = a_n. \]  Thus, $\widetilde{\delta_3}$ represents the class $a_n \cdot \mu' + 2 \lambda'$.  It follows that $K_n$ is isotopic to the $(2,a_n)$-cable of $K_{n-1}$.  To complete the induction step, we use the fact that $\widetilde{\gamma_3}$ represents the class $2a_n \cdot \mu + \lambda$ (cf. \cite[p. 32]{gordon:pcmi}).  Orienting $\widetilde{\gamma_2}$ appropriately, we have \[ \langle  \mu, \widetilde{\gamma_2} \rangle = \langle \widetilde{\gamma_1}, \widetilde{\gamma_2} \rangle = 1 \] and \[ \langle \widetilde{\gamma_2}, \lambda  \rangle = \langle \widetilde{\gamma_2},  \widetilde{\gamma_3} - 2 a_n \cdot \mu \rangle = -1 + 2a_n = p_n. \]  It follows that $\widetilde{\gamma_2}$ represents the stated class.

It stands to verify that $2g(K_n) -1 = p_n - \sqrt{3p_n + 1}$. This follows easily from the behavior of the knot genus under cabling.  An alternative argument runs as follows.  Since $b_2(X_n) = n-1$, the vector $\sigma$ belongs to $-\Z^n$.  Furthermore, $| \langle \sigma, \sigma \rangle | = p_n$.  In light of \eqref{e: change}, it follows that $\sigma = \sum_{i=1}^n 2^{i-1}e_i$.  The formula for $g(K_n)$  now follows on application of \eqref{e: genus}.

\end{proof}

Fintushel-Stern have given a construction for a Kirby diagram of an iterated cable \cite{fs:lenssurgery}.  It would be illuminating to identify the spaces $\Sigma(\kappa_n)$ and $(K_n)_{p_n}$ using their technique.

%%%%%%%%%%
%%%%%%%%%%
%%%%%%%%%%

\section{Concluding remarks}\label{s: conclusion}

\subsection{Iterated cables.}  We discovered the construction in Proposition \ref{p: iterated cable} in the following indirect way.  Suppose that $(K,p)$ attains equality in \eqref{e: genusbound 2}, where $K_p$ bounds a sharp 4-manifold $X$.  It follows that the vector $\sigma$ representing the class $[\Sigma]$ must attain equality in \eqref{e: change} for all $i$.  Thus, $\sigma$ takes the form $ \sum_{i=1}^n 2^{i-1} e_i$ for some $n \geq 1$, and $p = |\langle \sigma, \sigma \rangle | = p_n$.  Now, $H_2(X)$ embeds in $-\Z^n$ as the orthogonal complement $(\sigma)^\perp$.  This subspace is spanned by the vectors $2e_i - e_{i+1}$, for $i = 1,\dots,n-1$.  With respect to this basis, the intersection pairing on $X$ equals the linking matrix for $\L_n$.  Thus, the simplest choice for $X$ is the result of attaching 2-handles to $D^4$ along the framed link $\L_n$.  The knot $\kappa_n$ results from reverse-engineering the process for producing a sharp 4-manifold from the branched double-cover of a non-split alternating link \cite[p. 719]{os:unknotting}.  The family of knots $K_n$ follows in turn.

It appears difficult to address whether the family of knots $K_n$ attaining equality in \eqref{e: genusbound 2} is unique.  Any other candidate knot must have the same torsion coefficients, and hence knot Floer homology groups, as some $K_n$.  Examples of distinct L-space knots with identical knot Floer homology groups do exist, but not in great abundance (cf. \cite[$\S$1.1.3]{hedden:cabling2}).

%%%
%%%
%%%

\subsection{The Goda-Teragaito conjecture.}\label{ss: goda-teragaito}

Theorem \ref{t: berge bound} implies the second bound in \eqref{e: goda-teragaito} for all $p \geq 20$.  Furthermore, a quick analysis of changemakers of norm $18$ and $20$, coupled with an application of Proposition \ref{p: genus}, settles \eqref{e: goda-teragaito} for these two values of $p$ as well.  The values $p \leq 17$, with the exception of $p=14$, fall to a theorem of Baker \cite[Theorem 1.6]{baker:lens}.  Combining these results, the second bound in \eqref{e: goda-teragaito} follows for all except the two values $p \in \{14,19\}$.  Part of the difficulty in handling these remaining cases owes to the fact that $14$-surgery along the $(3,5)$-torus knot and $19$-surgery along the $(4,5)$-torus knot both produce lens spaces, while neither of these knots is hyperbolic.  The best that our methods establish is that any putative counterexample to \eqref{e: goda-teragaito} must have the same knot Floer homology groups as one of these two knots.

%%%
%%%
%%%

\subsection{The realization problem.}\label{ss: lens conclusion}  If $K_p = L(p,q)$ for some knot $K$, then Theorem \ref{t: changemaker} implies that $H_2(X(p,q)) \oplus H_2(-W_p(K))$ embeds as a full-rank sublattice of $-\Z^{n+1}$, and the vector $\sigma$ corresponding to the generator of $H_2(-W_p(K))$ is a changemaker $\sigma$.  Moreover, it follows in this case that $H_2(X(p,q))$ is the orthogonal complement to $(\sigma) \subset -\Z^{n+1}$.  This fact places a restriction on the intersection pairing of the plumbing manifold $X(p,q)$.  In fact, this necessary condition turns out to be sufficient as well.  This is the main thrust of \cite{greene:lens}, which answers the {\em realization problem}: which lens spaces arise by positive integer surgery along a knot $K \subset S^3$?  The refined techniques of that paper also lead to Theorem \ref{t: berge bound}.

%%%%%%%%%%%%%%%%
%%%%%%%%%%%%%%%%
%%%%%%%%%%%%%%%%

\bibliographystyle{plain}
\bibliography{/Users/Josh/Dropbox/Papers/References}

\end{document}